\documentclass{amsart}

\usepackage{amsmath,amssymb,amsthm,amscd}
\newtheorem{theorem}{Theorem}
\newtheorem{lemma}[theorem]{Lemma}
\newtheorem{proposition}[theorem]{Proposition}
\newtheorem{corollary}[theorem]{Corollary}
\newtheorem{remark}[theorem]{Remark}
\newtheorem{example}[theorem]{Example}

\begin{document}

\title[Weak dimension of FP-injective modules]{Weak dimension of FP-injective modules over chain rings}
\author{Fran\c{c}ois Couchot}
\address{Universit\'e de Caen Basse-Normandie, CNRS UMR
  6139 LMNO,
F-14032 Caen, France}
\email{francois.couchot@unicaen.fr} 

\keywords{chain ring, injective module, FP-injective module, weak dimension}
\subjclass[2010]{13F30, 13C11, 13E05}

\begin{abstract} It is proven that the weak dimension of each FP-injective module over a chain ring which is either Archimedean or not semicoherent is less or equal to $2$. This implies that the projective dimension of any countably generated FP-injective module over an Archimedean chain ring is less or equal to $3$. 
\end{abstract}

\maketitle

By \cite[Theorem 1]{Cou12}, for any module $G$ over a commutative arithmetical ring $R$ the weak dimension of $G$ is $0,\ 1,\ 2$ or $\infty$. In this paper we consider the weak dimension of almost FP-injective modules over a chain ring. This class of modules contains the one of FP-injective modules and these two classes coincide if and only if the ring is coherent. If $G$ is an almost FP-injective module over a chain ring $R$ then its weak dimension is possibly infinite only if $R$ is semicoherent and not coherent. In the other cases the weak dimension of $G$ is at most $2$. Moreover this dimension is not equal to $1$ if $R$ is not an integral domain. Theorem \ref{T:main} summarizes main results of this paper.

We complete this short paper by considering almost FP-injective modules over local fqp-rings. This class of rings was introduced in \cite{AbJaKa11} by Abuhlail, Jarrar and Kabbaj. It contains the one of arithmetical rings. It is shown the weak dimension of $G$ is infinite if $G$ is an almost FP-injective module over a local fqp-ring which is not a chain ring (see Theorem \ref{T:main1}).

All rings in this paper are unitary and commutative.
A  ring $R$ is said to be a {\bf chain ring}\footnote{we prefer ``chain ring '' to ``valuation ring'' to avoid confusion with ``Manis valuation ring''.}  if its lattice of ideals is totally ordered by inclusion. Chain rings are also called valuation rings (see \cite{FuSa85}).
If $M$ is an $R$-module, we denote by $\mathrm{w.d.}(M)$ its {\bf weak dimension} and $\mathrm{p.d.}(M)$ its {\bf projective dimension}. Recall that $\mathrm{w.d.}(M)\leq n$ if $\mathrm{Tor}^R_{n+1}(M,N)=0$ for each $R$-module $N$. For any ring $R$, its {\bf global weak dimension} $\mathrm{w.gl.d}(R)$ is the supremum of $\mathrm{w.d.}(M)$ where $M$ ranges over all (finitely presented cyclic) $R$-modules. Its {\bf finitistic weak dimension}
$\mathrm{f.w.d.}(R)$ is the supremum of $\mathrm{w.d.}(M)$ where $M$ ranges over all  $R$-modules of finite weak dimension. 

A ring is called {\bf coherent} if all its finitely generated ideals are finitely presented. As  in \cite{Mat85}, a ring $R$ is said to be \textbf{semicoherent} if $\mathrm{Hom}_R(E,F)$ is a submodule of a flat $R$-module for any pair of injective $R$-modules $E,\ F$. A ring $R$ is said to be \textbf{IF} ({\bf semi-regular} in \cite{Mat85}) if each injective $R$-module is flat. If $R$ is a chain ring, we denote by $P$ its maximal ideal, $Z$ its  subset of zerodivisors which is a prime ideal and $Q(=R_Z)$ its fraction ring. If $x$ is an element of a module $M$ over a ring $R$, we denote by $(0:x)$ the annihilator ideal of $x$ and by $\mathrm{E}(M)$ the injective hull of $M$.


Some preliminary results are needed to prove Theorem \ref{T:main} which is the main result of this paper.
 
\begin{proposition} \cite[Proposition 4] {Cou12}.
\label{P:ValSemiCoh} Let $R$ be a chain ring.  The following conditions are equivalent:
\begin{enumerate}
\item $R$ is semicoherent; 
\item $Q$ is an IF-ring;
\item $Q$ is coherent;
\item either $Z=0$ or $Z$ is not flat;
\item $\mathrm{E}(Q/Qa)$ is flat for each nonzero element $a$ of $Z$;
\item there exists $0\ne a\in Z$ such that $(0:a)$ is finitely generated over $Q$.
\end{enumerate} 
\end{proposition}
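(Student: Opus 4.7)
The plan is to establish the six conditions as equivalent by a cycle of implications, exploiting that $Q = R_Z$ is a local chain ring whose maximal ideal $ZQ$ coincides with its set of zerodivisors. For $(1) \Leftrightarrow (2)$ the key observation is that every element of $R \setminus Z$ is regular, so every injective $R$-module $E$ is uniquely divisible by such elements and hence carries a canonical $Q$-module structure. Consequently $\mathrm{Hom}_R(E,F) = \mathrm{Hom}_Q(E,F)$ for injective $E,F$, and a $Q$-module is $R$-flat iff it is $Q$-flat; this transfers semicoherence between $R$ and $Q$, and for the chain ring $Q$ with maximal ideal consisting of zerodivisors, semicoherence is equivalent to $Q$ being IF.

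For $(2) \Leftrightarrow (3)$ the implication IF $\Rightarrow$ coherent is standard; the converse uses that such a $Q$ is self-FP-injective (classical for chain rings whose maximal ideal is the set of zerodivisors, as in Fuchs--Salce), so coherent plus self-FP-injective gives IF. For $(3) \Leftrightarrow (6)$ I would use that every finitely generated ideal of $Q$ is principal, so coherence amounts to each $(0:a)$ being principal; the content of $(6)$ is that a single such annihilator suffices, which I would prove by comparing an arbitrary $b \in Q$ with the given $a$ under the total order on principal ideals and transferring principality across the divisibility relation.

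The remaining conditions address the annihilator structure directly. Condition $(4)$ encodes a dichotomy: $Z = 0$ gives a domain, where semicoherence is immediate, while $Z \neq 0$ non-flat pinpoints precisely the annihilator obstruction needed to force finite generation of the $(0:a)$; the equivalence with $(1)$ follows by expressing flatness of the directed union $Z = \bigcup_{a \in Z} Ra$ in terms of the annihilators $(0:a)$ and a purity criterion specific to chain rings. Condition $(5)$ reformulates IF on the specific cyclics $Q/Qa$; since IF can be tested on finitely presented cyclics and every such module over the chain ring $Q$ has this shape, $(5) \Leftrightarrow (2)$ follows.

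The main obstacle I anticipate is the step $(6) \Rightarrow (3)$: propagating finite generation of a single annihilator $(0:a_0)$ to all $(0:a)$ requires careful chain-ring bookkeeping depending on whether $a_0$ divides $a$ or vice versa, and handling the zerodivisor interaction in each case. A similar care is needed in $(4) \Leftrightarrow (1)$, where one must unwind what non-flatness of a potentially non-finitely-generated prime ideal $Z$ means in terms of individual element annihilators and relate it back to the IF property on $Q$.
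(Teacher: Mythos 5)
First, note that the paper does not prove this proposition at all: it is imported verbatim from \cite[Proposition 4]{Cou12}, so there is no internal argument to compare yours against. Judged on its own terms, your outline contains at least one step that is simply false and two others that are left as unexplained assertions at exactly the points where the real work lies. The false step is the foundation of your $(1)\Leftrightarrow(2)$: an injective $R$-module is divisible by the regular elements $s\in R\setminus Z$, but not \emph{uniquely} divisible, so it does not carry a $Q$-module structure. For instance, over a valuation domain $R$ (where $Z=0$ and $Q$ is the fraction field), $\mathrm{E}(R/P)$ is a torsion module and is certainly not a $Q$-vector space; the same failure occurs whenever $Z\ne P$. Hence $\mathrm{Hom}_R(E,F)=\mathrm{Hom}_Q(E,F)$ is unavailable and your transfer of semicoherence between $R$ and $Q$ collapses. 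Moreover, even after reducing to $Q$, the assertion that ``semicoherence is equivalent to IF for a chain ring whose maximal ideal consists of zerodivisors'' is essentially the proposition itself: semicoherence only gives that $\mathrm{Hom}(E,F)$ embeds in a flat module, and to conclude that $E$ itself is flat you would want to write $E=\mathrm{Hom}(Q,E)$ with $Q$ injective, which requires $Q$ to be self-\emph{injective}, not merely self-FP-injective. That gap needs a genuine idea, not bookkeeping.

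The two implications you flag as ``obstacles'' are indeed where the content is, and your plan does not contain the mechanism to carry them out. For $(6)\Rightarrow(3)$, the naive comparison of $b$ with the given $a$ under divisibility leads (writing $b=ad$, $(0:a)=cQ$, $c=dc'$) to the sub-case $(0:b)=(0:d)=a(0:b)$, which cannot be dismissed without invoking the duality $(0:(0:x))=xQ$ systematically, or better, by routing the cycle through condition $(4)$: one shows $\mathrm{Tor}_1^R(R/aR,Z)\cong(0:a)/(0:a)Z$, and that $(0:a)=(0:a)Z$ precisely when $(0:a)$ is not principal over $Q$, which links $(4)$ and $(6)$ cleanly and avoids the direct propagation. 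Your description of $(4)$ gestures at this but never states the computation. Similarly, $(5)\Rightarrow(2)$ is not a matter of ``testing IF on finitely presented cyclics'': condition $(5)$ concerns the injective hulls $\mathrm{E}(Q/Qa)$, and passing from flatness of these particular injectives to flatness of \emph{all} injective $Q$-modules (or to coherence of $Q$ via Jain's criterion that an IF-ring is one in which every finitely presented module embeds in a free module) is a real step requiring the structure theory of injectives over chain rings from \cite{Couch03}. Finally, a small but genuine mismatch you inherit from the statement: when $Z=0$ conditions $(1)$--$(5)$ hold while $(6)$ is vacuously false as written; your dichotomy under $(4)$ notices the case $Z=0$ but never reconciles it with $(6)$.
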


An exact sequence of $R$-modules $0 \rightarrow F \rightarrow E \rightarrow G \rightarrow 0$  is {\bf pure}
if it remains exact when tensoring it with any $R$-module. Then, we say that $F$ is a \textbf{pure} submodule of $E$. When $E$ is flat, it is well known that $G$ is flat if and only if $F$ is a pure submodule of $E$. An $R$-module $E$ is {\bf FP-injective}  if 
$\hbox{Ext}_R^1 (F, E) = 0$  for any finitely presented $R$-module $F$.  We recall that a module $E$ is FP-injective if and only if it is a pure submodule
of every overmodule. We define a module $G$ to be {\bf almost FP-injective} if there exist a FP-injective module $E$ and a pure submodule $D$ such that $G\cong E/D$. By \cite[35.9]{Wis91} the following theorem holds:

\begin{theorem}
\label{T:coh} A ring $R$ is coherent if and only if each almost FP-injective $R$-module is FP-injective.
\end{theorem}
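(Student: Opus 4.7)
The plan is to prove the two implications separately.

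For the forward direction, suppose $R$ is coherent and let $G=E/D$ be almost FP-injective with $E$ FP-injective and $D$ a pure submodule. For any finitely presented $R$-module $F$, I would apply $\mathrm{Hom}_R(F,-)$ to $0\to D\to E\to G\to 0$. Purity of $D$ in $E$ makes $\mathrm{Hom}_R(F,E)\to\mathrm{Hom}_R(F,G)$ surjective, so together with $\mathrm{Ext}^1_R(F,E)=0$ the long exact sequence gives $\mathrm{Ext}^1_R(F,D)\hookrightarrow\mathrm{Ext}^1_R(F,E)=0$; hence $D$ is itself FP-injective. Continuing one step further, $\mathrm{Ext}^1_R(F,G)$ embeds into $\mathrm{Ext}^2_R(F,D)$. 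Coherence of $R$ ensures that a first syzygy $\Omega F$ is finitely presented, so $\mathrm{Ext}^2_R(F,D)\cong \mathrm{Ext}^1_R(\Omega F,D)=0$ by FP-injectivity of $D$. Hence $\mathrm{Ext}^1_R(F,G)=0$ and $G$ is FP-injective.

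For the converse, assume every almost FP-injective module is FP-injective. I would show the class of FP-injective modules is closed under filtered colimits and then invoke Stenstr\"om's classical characterization of coherent rings as exactly those over which this closure property holds. Two ingredients are needed. First, direct sums of FP-injective modules are FP-injective: for $F$ finitely presented, $\mathrm{Ext}^1_R(F,-)$ commutes with direct sums, because any finite presentation of $F$ produces a finitely generated syzygy whose $\mathrm{Hom}$ commutes with direct sums. Second, every filtered colimit is a pure epimorphic image of the direct sum of its terms: the canonical surjection $\bigoplus_i E_i\twoheadrightarrow \varinjlim_i E_i$ has pure kernel, since tensoring with any module commutes with both $\bigoplus$ and $\varinjlim$ and filtered colimits are exact. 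Combining these, every filtered colimit of FP-injective modules is almost FP-injective, hence FP-injective by hypothesis, and Stenstr\"om's theorem then forces $R$ to be coherent.

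I expect the main obstacle to be the converse, which genuinely depends on Stenstr\"om's characterization of coherence by closure of FP-injectives under direct limits (equivalently, by closure of flat modules under direct products). The forward direction is essentially a compact Ext chase whose only nontrivial input is that a coherent ring has finitely presented syzygies of finitely presented modules.
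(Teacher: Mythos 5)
Your proof is correct, but it is worth noting that the paper does not actually argue this theorem at all: it is stated as an immediate consequence of \cite[35.9]{Wis91}, i.e.\ of the standard characterization of coherent rings (Stenstr\"om's theorem that $R$ is coherent iff FP-injective modules are closed under direct limits, equivalently iff flat modules are closed under products). So you have supplied a genuine proof where the paper supplies a citation. Your forward direction is a clean and fully elementary $\mathrm{Ext}$ chase; the only place coherence enters is in guaranteeing that the syzygy $\Omega F$ is finitely presented so that $\mathrm{Ext}^2_R(F,D)\cong\mathrm{Ext}^1_R(\Omega F,D)=0$, and as a byproduct you observe (correctly, and in fact without needing coherence) that a pure submodule of an FP-injective module is FP-injective. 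Your converse reduces the statement to Stenstr\"om's theorem via two standard facts, both of which are sound: $\mathrm{Ext}^1_R(F,-)$ commutes with direct sums for $F$ finitely presented, and the canonical epimorphism $\bigoplus_i E_i\to\varinjlim_i E_i$ of a filtered system has pure kernel (this is exactly \cite[33.9(2)]{Wis91}, which the paper itself invokes in Example \ref{E:example}; the slickest justification is that any map from a finitely presented module into a filtered colimit factors through some $E_j$ and hence lifts to the direct sum). The net effect is that your argument is not more elementary than the paper's — the converse still rests on Stenstr\"om's nontrivial equivalence, which is essentially what the cited \cite[35.9]{Wis91} packages — but it makes explicit how the definition of almost FP-injective modules is tailored to interpolate between pure quotients and direct limits of FP-injective modules, which is the real point of the theorem.
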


\begin{proposition}
\label{P:flat} For any ring $R$ the following conditions are equivalent:
\begin{enumerate}
\item $R$ is self FP-injective;
\item each flat $R$-module is almost FP-injective.
\end{enumerate}
\end{proposition}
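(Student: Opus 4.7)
The plan is to use free presentations of flat modules and the fact that a quotient by a pure submodule of a free module is exactly a flat module, combined with the standard closure properties of the class of FP-injective modules.

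For the implication $(1) \Rightarrow (2)$, given a flat $R$-module $G$, choose a short exact sequence $0 \to K \to L \to G \to 0$ with $L$ free. Since $G$ is flat, this sequence is pure, so $K$ is a pure submodule of $L$. If we can show that $L$ is FP-injective, we conclude $G \cong L/K$ is almost FP-injective by definition. Now $L$ is a direct sum of copies of $R$, so this reduces to the assertion that a direct sum of FP-injective modules is FP-injective. For a finitely presented module $M$, pick a presentation $0 \to K_0 \to R^n \to M \to 0$ with $K_0$ finitely generated; since both $R^n$ and $K_0$ are finitely generated, $\mathrm{Hom}_R(R^n, -)$ and $\mathrm{Hom}_R(K_0, -)$ commute with arbitrary direct sums, and the snake lemma then yields $\mathrm{Ext}^1_R(M, \bigoplus N_i) \cong \bigoplus \mathrm{Ext}^1_R(M, N_i)$, which vanishes when each $N_i$ is FP-injective. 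Hypothesis (1) thus makes $L$ FP-injective and finishes this direction.

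For the implication $(2) \Rightarrow (1)$, apply (2) to the flat module $R$ itself: there exist an FP-injective module $E$ and a pure submodule $D \subseteq E$ with $R \cong E/D$. The resulting exact sequence $0 \to D \to E \to R \to 0$ splits because $R$ is projective, so $E \cong D \oplus R$. A direct summand of an FP-injective module is FP-injective (splitting the Ext-group along the decomposition), so $R$ is FP-injective as a module over itself, that is, $R$ is self FP-injective.

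There is no genuine obstacle in the argument; both directions are formal once one uses the two standard closure properties of the FP-injective class (closure under arbitrary direct sums and under direct summands). The only point requiring a brief justification, should one want to be explicit, is the commutation of $\mathrm{Ext}^1_R(M, -)$ with direct sums for finitely presented $M$.
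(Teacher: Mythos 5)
Your proof is correct and follows the same route as the paper: for $(1)\Rightarrow(2)$ the paper likewise writes a flat module as $F/K$ with $F$ free and $K$ pure, using that $F$ is FP-injective (your direct-sum closure argument is exactly the detail the paper leaves implicit), and for $(2)\Rightarrow(1)$ the paper also uses projectivity of $R$ to realize $R$ as a direct summand of an FP-injective module.
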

\begin{proof}
$(1)\Rightarrow (2)$. Each flat module $G$ is of the form $F/K$ where $F$ is free and $K$ is a pure submodule of $F$. Since $F$ is FP-injective then $G$ is almost FP-injective.

$(2)\Rightarrow (1)$. We use the fact that $R$ is projective to show that $R$ is a direct summand of a FP-injective module.
\end{proof}

\begin{proposition}\label{P:pure}
Let $R$ be a ring and $0\rightarrow L\xrightarrow{v} M\rightarrow N\rightarrow 0$ a pure exact sequence of $R$-modules. Then $\mathrm{w.d.}(M)=\max (\mathrm{w.d.}(L),\mathrm{w.d.}(N))$.
\end{proposition}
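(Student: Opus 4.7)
My approach would be to first upgrade the long exact sequence of $\mathrm{Tor}$ associated to the pure exact sequence to a family of short exact sequences at \emph{every} level, and then read off the weak-dimension equality mechanically.

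Concretely, I would prove by induction on $n\ge 0$ that for every $R$-module $X$ the connecting homomorphism
\[ \delta_n\colon\mathrm{Tor}_{n+1}^R(N,X)\to\mathrm{Tor}_n^R(L,X) \]
in the long exact sequence vanishes, so that
\[ 0\to\mathrm{Tor}_n^R(L,X)\to\mathrm{Tor}_n^R(M,X)\to\mathrm{Tor}_n^R(N,X)\to 0 \]
is exact. The case $n=0$ is essentially the definition of purity: the image of $\delta_0$ equals the kernel of $L\otimes_R X\to M\otimes_R X$, which is zero because the given sequence is pure. For $n\ge 1$, I would pick a free presentation $0\to K\to F\to X\to 0$. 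Flatness of $F$ turns the long $\mathrm{Tor}(Y,-)$ sequence of this resolution into natural isomorphisms $\mathrm{Tor}_{n+1}^R(Y,X)\cong\mathrm{Tor}_n^R(Y,K)$ for every $Y$, and by naturality these isomorphisms identify $\delta_n$ (for the variable $X$) with the connecting map $\mathrm{Tor}_n^R(N,K)\to\mathrm{Tor}_{n-1}^R(L,K)$ (for the variable $K$). The latter is zero by the inductive hypothesis applied to $K$ in place of $X$, so $\delta_n=0$.

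Once the short exact sequences are in hand, the conclusion is immediate. If $\mathrm{w.d.}(M)\le n$, then $\mathrm{Tor}_{n+1}^R(M,X)=0$ for every $X$, and exactness forces both $\mathrm{Tor}_{n+1}^R(L,X)$ and $\mathrm{Tor}_{n+1}^R(N,X)$ to vanish, giving $\max(\mathrm{w.d.}(L),\mathrm{w.d.}(N))\le n$. Conversely, if both flanking weak dimensions are $\le n$, then both flanking $\mathrm{Tor}_{n+1}$ terms vanish and exactness of the middle forces $\mathrm{Tor}_{n+1}^R(M,X)=0$, so $\mathrm{w.d.}(M)\le n$. The one subtle point I expect is the bookkeeping in the inductive step, namely verifying that the dimension-shift isomorphism is natural in the first argument and really intertwines the two connecting maps; once that is set up, everything else is formal manipulation of the Tor long exact sequence.
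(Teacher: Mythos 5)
Your proof is correct, and it rests on the same two ingredients as the paper's --- dimension shifting plus the fact that purity makes $L\otimes_RX\to M\otimes_RX$ injective --- but it organizes them differently. The paper fixes a test module $G$, takes a flat resolution, and performs a single $m$-fold shift in that variable (replacing $G$ by its $m$-th syzygy $G_m$), so that everything happens in the four-term sequence $\mathrm{Tor}_1(G_m,M)\to\mathrm{Tor}_1(G_m,N)\to G_m\otimes L\to G_m\otimes M$; the short exact sequence $0\to L\to M\to N\to 0$ is never resolved, and no compatibility between two connecting maps is needed. You instead induct on the homological degree, shifting in the second variable, and prove the stronger structural fact that a pure exact sequence stays exact after applying $\mathrm{Tor}_n(-,X)$ in every degree; the weak-dimension equality then falls out in one line, and in both directions at once (the paper leaves the $L$-inequality and the converse as ``easy to show''). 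The price is exactly the point you flag: what you call ``naturality'' is really the anticommutativity (up to sign) of the square formed by the connecting homomorphisms of the two short exact sequences, a standard but genuinely nontrivial lemma that has to be cited or checked. One small inaccuracy in your write-up: for $n=1$ the dimension-shift map $\mathrm{Tor}_1^R(L,X)\to L\otimes_RK$ is only injective, not an isomorphism (its image is $\ker(L\otimes K\to L\otimes F)$); injectivity is all you need to conclude $\delta_1=0$, but you should say so rather than assert an isomorphism for every $n$.
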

\begin{proof} Let $m=\mathrm{w.d.}(M)$ and let $G$ be an $R$-module. We consider the following flat resolution of $G$:
\[F_p\xrightarrow{u_p} F_{p-1}\dots F_2\xrightarrow{u_2} F_1\xrightarrow{u_1} F_0\rightarrow G\rightarrow 0.\]
For each positive integer $p$ let $G_p$ be the image of $u_p$. We have the following exact sequence:
\[\mathrm{Tor}_1^R(G_m,M)\rightarrow \mathrm{Tor}_1^R(G_m,N)\rightarrow G_m\otimes_RL\xrightarrow{id_{G_m}\otimes v} G_m\otimes_RM.\]
Since $\mathrm{Tor}_1^R(G_m,M)\cong\mathrm{Tor}_{m+1}^R(G,M)=0$ and $id_{G_m}\otimes v$ is a monomorphism we deduce that $\mathrm{Tor}_{m+1}^R(G,N)\cong\mathrm{Tor}_{1}^R(G_m,N)=0$. So, $\mathrm{w.d.}(N)\leq m$. Now it is easy to show first, that $\mathrm{w.d.}(L)\leq m$ too, and then, if $q=\max (\mathrm{w.d.}(L),\mathrm{w.d.}(N))$, that $\mathrm{Tor}_{q+1}^R(G,M)=0$ for each $R$-module $G$. Hence $m=q$.
\end{proof}

\begin{proposition}\label{P:R/Z}
Let $R$ be a chain ring. Then:
\begin{enumerate}
\item $\mathrm{w.d.}(R/Z)=\infty$ if $R$ is semicoherent and $Z\ne 0$;
\item $\mathrm{w.d.}(R/Z)=1$ if $R$ is not semicoherent.
\end{enumerate}
\end{proposition}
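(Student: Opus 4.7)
The plan is to treat the two parts separately.

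For part (2), the assumption that $R$ is not semicoherent combined with Proposition \ref{P:ValSemiCoh}(4) forces $Z\neq 0$ and $Z$ flat over $R$. The exact sequence $0\to Z\to R\to R/Z\to 0$ is then a flat resolution of $R/Z$ of length one, so $\mathrm{w.d.}(R/Z)\leq 1$. To get the matching lower bound I would fix $0\neq a\in Z$ and verify that $a\in aR\cap Z$ but $a\notin aZ$; the latter because an equation $a=az$ with $z\in Z\subseteq P$ would make $1-z$ a unit and force $a=0$. Hence the map $aR\otimes_R R/Z\to R/Z$ is not injective, $R/Z$ is not flat, and $\mathrm{w.d.}(R/Z)=1$.

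For part (1) my strategy is to exhibit a single $R$-module $N$ for which $\mathrm{Tor}_n^R(R/Z,N)\neq 0$ for every $n\geq 0$. By Proposition \ref{P:ValSemiCoh}(6) there exists $0\neq a\in Z$ with $(0:a)$ finitely generated over $Q$. Since $Q$ is a coherent chain ring (Proposition \ref{P:ValSemiCoh}(3)), this annihilator is cyclic over $Q$, say $(0:a)=bQ$ with $b\in Z$. The IF property of $Q$ makes it self-FP-injective, so the double-annihilator identity $(0:(0:a))=aQ$ holds in $Q$, forcing $(0:b)=aQ$ as well. These two equalities splice together into a $2$-periodic exact sequence
\[
\cdots\to Q\xrightarrow{\cdot b}Q\xrightarrow{\cdot a}Q\xrightarrow{\cdot b}Q\xrightarrow{\cdot a}Q\to Q/aQ\to 0,
\]
and because $Q$ is flat over $R$ (it is a localization) this is also a flat resolution of $Q/aQ$ over $R$.

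To finish I would compute $\mathrm{Tor}_n^R(Q/aQ,R/Z)$ from this resolution by tensoring with $R/Z$ over $R$. Every term becomes $Q\otimes_R R/Z=Q/Z$, and both differentials $\cdot a$ and $\cdot b$ act as zero on $Q/Z$ because $a,b\in Z$. Therefore $\mathrm{Tor}_n^R(Q/aQ,R/Z)\cong Q/Z\neq 0$ for every $n\geq 0$, and by symmetry of Tor the same holds for $\mathrm{Tor}_n^R(R/Z,Q/aQ)$, yielding $\mathrm{w.d.}(R/Z)=\infty$.

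The main technical point is the existence of the periodic resolution of $Q/aQ$: finite generation of $(0:a)$ over $Q$ produces the principal annihilator $bQ$, and self-FP-injectivity of $Q$ closes the annihilator loop so that the process becomes $2$-periodic. Once that resolution is in hand, the whole Tor calculation reduces to the vanishing of the differentials on the residue field $Q/Z$ of $Q$.
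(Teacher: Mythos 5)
Your argument is correct, but it takes a genuinely different route from the paper. The paper's proof runs everything through the exact sequence $0\rightarrow R/Z\rightarrow Q/Z\rightarrow Q/R\rightarrow 0$ (noting $\mathrm{w.d.}(Q/R)=1$ when $R\ne Q$) and quotes \cite[Propositions 8 and 14]{Couch03} together with Proposition \ref{P:ValSemiCoh} to get $\mathrm{w.d.}(Q/Z)=\infty$ in case (1) and $\mathrm{w.d.}(Q/Z)=1$ in case (2); the statement about $R/Z$ is then transferred back along the sequence. You instead compute directly over $R$: for (2) the resolution $0\rightarrow Z\rightarrow R\rightarrow R/Z\rightarrow 0$ with $Z$ flat, plus the observation that $\mathrm{Tor}_1^R(R/aR,R/Z)\cong (aR\cap Z)/aZ\ne 0$ for $0\ne a\in Z$; for (1) an explicit $2$-periodic flat resolution of $Q/aQ$ built from the double-annihilator identity in the IF-ring $Q$, so that all the differentials die on $Q/ZQ$ and every $\mathrm{Tor}_n$ survives. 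Your version is essentially self-contained: beyond Proposition \ref{P:ValSemiCoh} it only uses that finitely generated ideals of a chain ring are principal and that a local self-FP-injective ring satisfies $(0:(0:a))=aQ$ (the same fact the paper invokes via \cite[Corollary 2.5]{Ja73} in Lemma \ref{L:simple}), whereas the paper's version is shorter but leans on the structure theory of injective modules from \cite{Couch03}. Two small points worth making explicit in your write-up: $(0:_Qa)\ne 0$ because $(0:_Ra)\ne 0$ and non-zerodivisors act injectively, so $b\ne 0$; and $b$, being a zerodivisor of $Q$, lies in the maximal ideal $ZQ$, which is all that is needed for multiplication by $b$ to vanish on $Q/ZQ$. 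Neither is a gap.
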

\begin{proof}
We consider the following exact sequence: $0\rightarrow R/Z\rightarrow Q/Z\rightarrow Q/R\rightarrow 0$. If $R\ne Q$ then $\mathrm{w.d.}(Q/R)=1$.

$(1)$. By \cite[Propositions 8 and 14]{Couch03} applied to $Q$ and Proposition \ref{P:ValSemiCoh} $\mathrm{w.d.}(Q/Z)=\infty$. By using the previous exact sequence when $R\ne Q$ we get $\mathrm{w.d.}(R/Z)=\infty$.

$(2)$. By Proposition \ref{P:ValSemiCoh} $\mathrm{w.d.}(Q/Z)=1$. When $R\ne Q$ we conclude by using the previous exact sequence.
\end{proof}

\begin{proposition}\label{P:H}
Let $R$ be a chain ring. If $R$ is IF, let $H=\mathrm{E}(R/rR)$ where $0\ne r\in P$. If $R$ is not IF, let $x\in\mathrm{E}(R/Z)$ such that $Z=(0:x)$ and $H=\mathrm{E}(R/Z)/Rx$. Then:
\begin{enumerate}
\item  $H$ is FP-injective; 
\item for each $0\ne r\in R$ there exists $h\in H$ such that $(0:h)=Rr$;
\item $\mathrm{w.d.}(H)=\infty$ if $R$ is semicoherent and $R\ne Q$;
\item $\mathrm{w.d.}(H)=2$ if $R$ is not semicoherent.
\end{enumerate}
\end{proposition}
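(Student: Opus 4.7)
The proof splits into the IF and non-IF cases. If $R$ is IF then $R$ is self-injective and hence classical, forcing $R=Q$; moreover IF rings are coherent, so $R$ is automatically semicoherent. Thus the hypotheses of (3) and (4) both place us in the non-IF setting, and only (1) and (2) need separate treatment in the IF case. In the IF case, (1) is immediate because $H=\mathrm{E}(R/rR)$ is injective, while (2) follows from the annihilator duality $(0:(0:r'))=r'R$ available in self-FP-injective chain rings: each principal ideal arises as the annihilator of an element of the sufficiently large injective $\mathrm{E}(R/rR)$.

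For the non-IF case, the central tool is the short exact sequence
\[
0 \longrightarrow Rx \longrightarrow \mathrm{E}(R/Z) \longrightarrow H \longrightarrow 0
\]
with $Rx\cong R/Z$ and $\mathrm{E}(R/Z)$ injective. For (1), the long exact Ext sequence gives $\mathrm{Ext}^1_R(F,H)\cong \mathrm{Ext}^2_R(F,R/Z)$ for every $R$-module $F$; I would check this vanishes on each finitely presented cyclic $F=R/aR$ (with $(0:a)=bR$ principal) by computing with a two-step free resolution of $R/aR$ and exploiting the non-IF hypothesis. For (2), observe that any injective $R$-module is a $Q$-module (non-zerodivisors act invertibly), and that $(0:r)\subseteq Z=(0:x)$ for all nonzero $r\in R$. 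Divisibility of $\mathrm{E}(R/Z)$ provides $y$ with $ry=x$, and a careful choice of $y$ within the coset $y+\mathrm{E}(R/Z)[r]$ ensures $(0:y)\subseteq rR$, yielding $(0:\bar y)=rR+(0:y)=rR$.

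For (3), $R$ semicoherent with $R\ne Q$ forces the non-IF case. Proposition \ref{P:ValSemiCoh} gives that $Q$ is IF, so every injective $Q$-module is $Q$-flat; thus $\mathrm{E}(R/Z)$, being an injective $R$-module and hence a $Q$-module, is $Q$-flat and therefore $R$-flat. The short exact sequence yields $\mathrm{Tor}_n^R(H,N)\cong \mathrm{Tor}_{n-1}^R(R/Z,N)$ for all $n\ge 2$, and Proposition \ref{P:R/Z}(1) then gives $\mathrm{w.d.}(H)=\infty$. For (4), $R$ not semicoherent gives $Z$ flat and $\mathrm{w.d.}(R/Z)=1$ by Proposition \ref{P:R/Z}(2). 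The upper bound $\mathrm{w.d.}(H)\le 2$ would come from controlling $\mathrm{w.d.}(\mathrm{E}(R/Z))$ through its $Q$-module structure and the weak dimension of the residue field $Q/ZQ$; the lower bound $\mathrm{w.d.}(H)\ge 2$ follows from (2) via a $\mathrm{Tor}_2$ computation against $R/rR$ that detects the torsion produced by elements with prescribed principal annihilator. The main technical obstacles are the $\mathrm{Ext}^2$ vanishing in (1), where the non-IF hypothesis enters subtly, and the careful upper bound in (4).
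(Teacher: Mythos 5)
Your skeleton for the non-IF case --- the exact sequence $0\rightarrow Rx\rightarrow \mathrm{E}(R/Z)\rightarrow H\rightarrow 0$ with $Rx\cong R/Z$, dimension shifting, and Proposition \ref{P:R/Z} --- is the same as the paper's, and your arguments for (2) and (3) are essentially right (in (2) no careful choice of $y$ is needed: any $y$ with $ry=x$ has $(0:y)\subseteq Rr$, since $cy=0$ with $c\notin Rr$ would give $r\in Rc$ and hence $x=ry=0$). But two of the steps you defer are genuine gaps, not routine verifications. For (1) you reduce correctly to $\mathrm{Ext}^2_R(R/aR,R/Z)=0$, but then propose to compute it from a resolution in which $(0:a)=bR$ is principal. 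Over a non-coherent chain ring $(0:a)$ need not be principal --- that is exactly what non-coherence means here --- so the resolution you write down does not exist in the case that matters most, and the vanishing is left unproved. (The paper does not reprove it either; it imports \cite[Propositions 6 and 14]{Couch03}, and some such input is needed.) Also, an IF chain ring is self FP-injective and coherent but need not be self-injective; your conclusion $R=Q$ survives because self FP-injectivity already forces every non-unit to be a zerodivisor, but your sketch of (2) in the IF case via ``annihilator duality'' is likewise not an argument.

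The more serious gap is (4). Everything in parts (3)--(4) hinges on the flatness of $\mathrm{E}(R/Z)$ (the paper cites \cite[Proposition 8]{Couch03}): with a flat middle term the sequence gives $\mathrm{Tor}^R_{n+1}(H,N)\cong\mathrm{Tor}^R_n(R/Z,N)$ for all $n\geq 1$, and both the upper and the lower bounds in (3) and (4) fall out of Proposition \ref{P:R/Z}. You establish this flatness in case (3) by noting that $\mathrm{E}(R/Z)$ is an injective $Q$-module and $Q$ is IF; that works. But in case (4) $R$ is not semicoherent, so by Proposition \ref{P:ValSemiCoh} $Q$ is \emph{not} IF and this route is closed. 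Your replacement (``controlling $\mathrm{w.d.}(\mathrm{E}(R/Z))$ through the weak dimension of the residue field $Q/ZQ$'') is not an argument, and without flatness of the middle term the lower bound $\mathrm{Tor}^R_2(H,-)\neq 0$ does not follow from $\mathrm{Tor}^R_1(R/Z,-)\neq 0$; your alternative $\mathrm{Tor}_2$ computation against $R/rR$ is never carried out. So the case the rest of the paper actually uses (it feeds into Theorem \ref{T:main}(4)) is precisely the one your proof does not cover.
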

\begin{proof}
$(1)$. When $R\ne Q$, $H$ is FP-injective by \cite[Proposition 6]{Couch03}. If $R=Q$ is not IF then $H\cong\mathrm{E}(R/rR)$ for each $0\ne r\in P$ by \cite[Proposition 14]{Couch03}. 

$(2)$. Let $0\ne r\in R$. Then $(0:r)\subseteq Z=(0:x)$. From the injectivity of $\mathrm{E}(R/Z)$ we deduce that there exists $y\in\mathrm{E}(R/Z)$ such that $x=ry$. Now, if we put $h=y+Rx$ it is easy to check that $(0:h)=(Rx:y)=Rr$.

$(3)$ and $(4)$. By \cite[Proposition 8]{Couch03} $\mathrm{E}(R/Z)$ is flat. We use the exact sequence $0\rightarrow R/Z\rightarrow\mathrm{E}(R/Z)\rightarrow H\rightarrow 0$ and Proposition \ref{P:R/Z} to conclude.
\end{proof}

The following proposition is a slight modification of \cite[Proposition 9]{Couch03}.
\begin{proposition}\label{P:inj}
Let $R$ be a chain ring and $G$ an injective module. Then there exists a pure exact sequence:
$0\rightarrow K\rightarrow I\rightarrow G\rightarrow 0$, such
that $I$ is a direct sum of submodules isomorphic to $R$ or $H.$ 
\end{proposition}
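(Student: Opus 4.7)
The plan is to build $I$ as a direct sum of one copy of $R$ per element of $G$ together with one copy of $H$ per pair $(g,r)$ with $0\neq r\in(0:g)$. Surjectivity onto $G$ will then be automatic, and providing one $H$-summand for each principal annihilator relation $rg=0$ will be exactly what is needed to lift cyclic maps $R/Rr\to G$ through the projection.

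Concretely, for each $g\in G$ let $R_g$ be a copy of $R$, with component map $R_g\to G$ given by $1\mapsto g$. For each pair $(g,r)$ with $0\neq r\in(0:g)$, use Proposition \ref{P:H}(2) to choose $h_{g,r}$ in a copy $H_{g,r}$ of $H$ with $(0:h_{g,r})=Rr$; since $rg=0$, the assignment $h_{g,r}\mapsto g$ extends linearly to $Rh_{g,r}\to G$, and then, by the injectivity of $G$, to a map $H_{g,r}\to G$. Set
\[
I \;=\; \Bigl(\bigoplus_{g\in G} R_g\Bigr)\;\oplus\;\Bigl(\bigoplus_{\substack{g\in G\\ 0\neq r\in(0:g)}} H_{g,r}\Bigr),
\]
let $\pi:I\to G$ be assembled componentwise, and put $K=\ker\pi$. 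The summands $R_g$ already hit every $g\in G$, so $\pi$ is surjective.

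To verify purity of $0\to K\to I\xrightarrow{\pi} G\to 0$, I invoke the classical fact that over a chain ring every finitely generated ideal is principal and every finitely presented module decomposes as a direct sum of cyclic modules $R/Rr$; purity then reduces to showing that every map $R/Rr\to G$ lifts to a map $R/Rr\to I$. Such a map sends $\bar 1$ to some $g\in G$ with $rg=0$. If $r=0$, the assignment $\bar 1\mapsto 1_g$ gives a lift via $R_g\hookrightarrow I$. If $r\neq 0$, the assignment $\bar 1\mapsto h_{g,r}$ is well-defined (since $(0:h_{g,r})=Rr$) and gives a lift via $H_{g,r}\hookrightarrow I$; in either case, composition with $\pi$ returns the original map.

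The only real subtlety is that $(0:g)$ need not be principal, so a single $H$-summand per $g$ would only supply lifts of $R/Rs\to G$ for $s$ ranging over a single principal subideal of $(0:g)$, forcing the indexing to include $r$ as well. The remaining ingredients — injectivity of $G$, used to extend each cyclic map $Rh_{g,r}\to G$ to $H_{g,r}$, and the Warfield-type decomposition of finitely presented modules over a chain ring into cyclics — are both standard inputs.
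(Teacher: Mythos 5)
Your construction is correct and follows essentially the same route as the paper: a free cover of $G$ together with enough copies of $H$ mapping to $G$ (built from Proposition \ref{P:H}(2) and the injectivity of $G$) so that every map $R/Rr\rightarrow G$ lifts, which gives purity via the decomposition of finitely presented modules over a chain ring into cyclics. The only cosmetic difference is the indexing: the paper takes one copy of $H$ for each element of $\mathrm{Hom}_R(H,G)$, whereas you index by pairs $(g,r)$, but the two choices serve the identical purpose.
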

\begin{proof}
There exist a set $\Lambda$ and an epimorphism
$\varphi:L=R^{\Lambda}\rightarrow G$. We put
$\Delta=\mathrm{Hom}_R(H,G)$ and $\rho:H^{(\Delta)}\rightarrow G$ the morphism
defined by the elements of $\Delta.$ Thus $\psi$ and $\rho$ induce an
epimorphism $\phi:I=R^{(\Lambda)}\oplus H^{(\Delta)}\rightarrow G.$ Since, for
every $r\in P,r\ne 0,$ each morphism $g:R/Rr\rightarrow G$ can be extended to
$H\rightarrow G,$ we deduce that $K=\ker \phi$ is a pure
submodule of $I.$
\end{proof}

\begin{lemma}
\label{L:aFP} Let $G$ be an almost FP-injective module over a chain ring $R$. Then for any $x\in G$ and $a\in R$ such that $(0:a)\subset (0:x)$ there exists $y\in G$ such that $x=ay$.
\end{lemma}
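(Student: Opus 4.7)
Write $G\cong E/D$ with $E$ FP-injective and $D$ pure in $E$, fix a preimage $x'\in E$ of $x$, and aim to produce $y'\in E$ with $ay'-x'\in D$. The strategy is to first record a divisibility property of FP-injective modules, then modify $x'$ by an element of $D$ so that this property applies.

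The divisibility property I will use is the following: if $E$ is FP-injective, $u\in E$, $a\in R$ satisfy $(0:a)\subseteq (0:u)$, then $u\in aE$. Indeed, since $E$ is FP-injective it is a pure submodule of its injective hull $\mathrm{E}(E)$; in $\mathrm{E}(E)$ the assignment $ra\mapsto ru$ defines a well-formed $R$-linear map $Ra\to \mathrm{E}(E)$, which extends to $R\to\mathrm{E}(E)$ by injectivity, giving some $z\in\mathrm{E}(E)$ with $az=u$. Purity of $E$ in $\mathrm{E}(E)$ then descends this single equation with right-hand side $u\in E$ to a solution in $E$. So it is enough to find a preimage $x''$ of $x$ with $(0:a)\subseteq (0:x'')$.

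For this I exploit the strict inclusion $(0:a)\subsetneq (0:x)$: pick $b\in (0:x)\setminus (0:a)$. Since the ideals of $R$ are totally ordered and $b\notin (0:a)$, comparison of $Rb$ with $(0:a)$ forces $(0:a)\subseteq Rb$. On the other hand $bx=0$ in $G$ yields $bx'\in D$, and tensoring the pure exact sequence $0\to D\to E\to G\to 0$ with $R/Rb$ gives $D\cap bE=bD$, so $bx'=bd$ for some $d\in D$. Then $x'':=x'-d$ is another preimage of $x$ with $bx''=0$; hence $Rb\subseteq (0:x'')$ and in particular $(0:a)\subseteq (0:x'')$. The divisibility property just recorded now delivers $y'\in E$ with $ay'=x''$, and $y:=y'+D\in G$ satisfies $ay=x$.

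The main obstacle is that the annihilator $(0:a)$ may fail to be finitely generated, so $R/aR$ may fail to be finitely presented and FP-injectivity of $E$ gives no immediate Baer-style extension that would produce $x'\in aE$ directly. The crux of the argument is that the strict inclusion together with the chain hypothesis collapses the potentially infinite family of annihilator constraints on the preimage into the single divisibility condition $bx'\in bD$, which purity of $D$ in $E$ resolves at once.
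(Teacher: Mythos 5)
Your argument is correct and follows essentially the same route as the paper's proof: choose $b\in(0:x)\setminus(0:a)$, use purity of $D$ to replace the preimage $x'$ by $x'-d$ with $b(x'-d)=0$, note $(0:a)\subseteq Rb\subseteq(0:x'-d)$ by the chain hypothesis, and invoke the divisibility property of the FP-injective module $E$. The only difference is that you spell out the purity step ($D\cap bE=bD$) and the divisibility lemma in more detail than the paper does.
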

\begin{proof}
We have $G=E/D$ where $E$ is a FP-injective module and $D$ a pure submodule. Let $e\in E$ such that $x=e+D$. Let $b\in (0:x)\setminus (0:a)$. Then $be\in D$. So, we have $b(e-d)=0$ for some $d\in D$. Whence $(0:a)\subset Rb\subseteq (0:e-d)$. From the FP-injectivity of $E$ we deduce that $e-d=az$ for some $z\in E$. Hence $x=ay$ where $y=z+D$.
\end{proof}

Let $M$ be a non-zero module over a  ring $R$. We set:
\[M_{\sharp}=\{s\in R\mid \exists 0\ne x\in M\ \mathrm{such\ that}\ sx=0\}\quad\mathrm{and}\quad M^{\sharp}=\{s\in R\mid sM\subset M\}.\]
Then $R\setminus M_{\sharp}$ and $R\setminus M^{\sharp}$ are multiplicative subsets of $R$. 
If $M$ is a module over a chain ring $R$ then $M_{\sharp}$ and $M^{\sharp}$ are prime ideals called respectively the {\bf bottom prime ideal} and the {\bf top prime ideal} associated with $M$.

\begin{proposition}
\label{P:sharp} Let $G$ be a module over a chain ring $R$. Then:
\begin{enumerate}
\item $G_{\sharp}\subseteq Z$ if $G$ is flat;
\item $G^{\sharp}\subseteq Z$ if $G$ is almost FP-injective;
\item $G^{\sharp}\subseteq Z\cap G_{\sharp}$ if $G$ is FP-injective. So, $G$ is a module over $R_{G_{\sharp}}$;
\item $G$ is flat and FP-injective if $G$ is almost FP-injective and $G_{\sharp}\cup G^{\sharp}\subset Z$. In this case $G^{\sharp}\subseteq G_{\sharp}$.
\end{enumerate}
\end{proposition}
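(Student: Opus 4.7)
The plan is to treat the four parts in order, using throughout the Tor and Ext identifications
\[
\mathrm{Tor}_1^R(R/aR,G)=\mathrm{Ann}_G(a)/(0:a)G, \qquad \mathrm{Ext}^1_R(R/aR,E)=\mathrm{Ann}_E((0:a))/aE,
\]
coming from the presentation $0\to aR\to R\to R/aR\to 0$, together with the purity identity $D\cap aE=aD$ for a pure submodule $D\subseteq E$. Since $aR$ is principal in a chain ring, $R/aR$ is always finitely presented, so FP-injectivity of $E$ forces $\mathrm{Ann}_E((0:a))=aE$ for every $a$.

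For $(1)$, $G$ flat gives $\mathrm{Tor}_1^R(R/sR,G)=0$; a nonzero $x$ with $sx=0$ then lies in $(0:s)G$, so $(0:s)\ne 0$ and $s\in Z$. For $(2)$, write $G=E/D$ with $E$ FP-injective and $D$ pure; if $s\notin Z$ then $(0:s)=0$ and the Ext identity yields $sE=\mathrm{Ann}_E(0)=E$, whence $sG=G$. For $(3)$, $G^\sharp\subseteq Z$ is $(2)$. For $G^\sharp\subseteq G_\sharp$, I apply FP-injectivity of $G$ directly, obtaining $\mathrm{Ann}_G((0:s))=sG$; if $sG\subsetneq G$ then some $g\in G$ admits $t\in(0:s)$ with $tg\ne 0$, and $x:=tg$ is a nonzero element killed by $s$. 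Every $s\in R\setminus G_\sharp$ is then both a non-zero-divisor on $G$ and, since $G^\sharp\subseteq G_\sharp$, surjective on $G$; multiplication by $s$ is thus an automorphism of $G$, and $G$ becomes a module over $R_{G_\sharp}$.

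Part $(4)$ is the main obstacle. Reading $\subset$ as strict, pick $s\in Z\setminus(G_\sharp\cup G^\sharp)$; then $s$ acts as an isomorphism on $G$ and for any $t\in(0:s)$ and $g=sh\in G$, $tg=tsh=0$, so $(0:s)G=0$ and $G$ is a module over $R/(0:s)$. To establish flatness and FP-injectivity I verify $\mathrm{Ann}_G(a)=(0:a)G$ and $\mathrm{Ann}_G((0:a))=aG$ for each $a\in R$. The case $a\notin Z$ follows from $G_\sharp\subseteq Z$ and $(2)$. For $a\in Z$ with $a\ne 0$, the chain structure gives $sR\subseteq aR$ or $aR\subsetneq sR$. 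In the first, $s\in aR$ forces $aG=G$ and $(0:a)\subseteq(0:s)$ forces $(0:a)G=\mathrm{Ann}_G(a)=0$, so both identities hold trivially. In the second, write $a=sb$ with $b$ non-unit; using $sG=G$ and $(0:s)G=0$, both sides of each identity can be rewritten in terms of $b$ (which has a strictly smaller annihilator, since $(0:s)\subsetneq bR$), and Lemma \ref{L:aFP} combined with the purity $D\cap bE=bD$ supplies the required lifts through $G=E/D$ to close the argument. The concluding inclusion $G^\sharp\subseteq G_\sharp$ is $(3)$ applied to the now FP-injective $G$.
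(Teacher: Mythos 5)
Parts (1)--(3) of your argument are correct and essentially the paper's own (the paper phrases them element-wise rather than through the Tor/Ext identifications, but the content is identical). The problem is in part (4), where you depart from the paper: the paper localizes the pure exact sequence at $L=G_{\sharp}\cup G^{\sharp}$ and invokes two cited facts, that $E_L$ is FP-injective and that $R_L$ is an IF-ring (\cite[Theorem 11]{Couch03}), over which FP-injective modules are automatically flat. Your element-wise substitute works for the FP-injectivity half: for $0\ne a=sb$ with $s\in Z\setminus L$ one indeed has $(0:b)\subsetneq(0:a)$ (since $(0:s)$ and $bR$ are nonzero ideals of a chain ring, $(0:s)\cap bR\ne 0$), and writing $x=sx'$ with $(0:x')=(0:x)\supseteq(0:a)\supsetneq(0:b)$ lets Lemma \ref{L:aFP} produce $x'=by'$, hence $x=ay'$. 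That half can be completed along your lines.

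The flatness half is where your proof has a genuine gap. The identity you must verify is $\mathrm{Ann}_G(a)=(0:a)G$, and your ``rewrite in terms of $b$'' is circular there: one checks that $\mathrm{Ann}_G(a)=\mathrm{Ann}_G(b)$ and $(0:a)G=(0:b)G$, so the statement at $a$ is literally equivalent to the same statement at $b$, where $b=a/s$ is again an arbitrary nonzero element of $L$ (every $a\in L$ satisfies $aR\subsetneq sR$ for every $s\notin L$, so $L\subseteq\bigcap_n s^nR$ and the descent $a=sb=s^2b_2=\cdots$ never reaches a base case). The strict shrinking of the annihilator $(0:b)\subsetneq(0:a)$ gives no well-founded induction. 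Nor do your two named tools help: Lemma \ref{L:aFP} is a divisibility statement, i.e.\ exactly the Ext-vanishing direction, and the purity $D\cap bE=bD$ would yield flatness of $G=E/D$ only if $E$ itself were flat, which an FP-injective $E$ need not be. Some input of the strength of ``$R_L$ is an IF-ring when $L\subsetneq Z$'' appears unavoidable here, and your sketch does not supply it.
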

\begin{proof}
$(1)$. Let $a\in G_{\sharp}$. There exists $0\ne x\in G$ such that $ax=0$. The flatness of $G$ implies that $x\in (0:a)G$. So, $(0:a)\ne 0$ and $a\in Z$.

$(2)$. Let $s\in R\setminus Z$. Then for each $x\in G$, $0=(0:s)\subseteq (0:x)$. If $G$ is FP-injective then $x=sy$ for some $y\in G$. If $G$ is almost FP-injective then it is factor of a FP-injective module, so,  the multiplication by $s$ in $G$ is surjective. 

$(3)$. Let $a\in R\setminus G_{\sharp}$ and $x\in G$. Let $b\in (0:a)$. Then $abx=0$, whence $bx=0$. So, $(0:a)\subseteq (0:x)$. It follows that $x=ay$ for some $y\in G$ since $G$ is FP-injective. Hence $a\notin G^{\sharp}$.

$(4)$. Let $0\rightarrow X\rightarrow E\rightarrow G\rightarrow 0$ be a pure exact sequence where $E$ is FP-injective, and $L=G^{\sharp}\cup G_{\sharp}$. Then $0\rightarrow X_L\rightarrow E_L\rightarrow G\rightarrow 0$ is a pure exact sequence and by \cite[Theorem 3]{Cou06} $E_L$ is FP-injective. By \cite[Theorem 11]{Couch03} $R_L$ is an IF-ring. Hence $G$ is FP-injective and flat.
\end{proof}

\begin{example}
Let $D$ be a valuation domain. Assume that $D$ contains a non-zero prime ideal $L\ne P$ and let $0\ne a\in L$. By \cite[Example 11 and Corollary 9]{Cou12} $R=D/aD_L$ is semicoherent and not coherent. Since $R$ is not self FP-injective then $\mathrm{E}_R(R/P)$ is not flat by \cite[Proposition 2.4]{Cou82}.
\end{example}

\begin{example}
\label{E:example} Let $R$ be a semicoherent chain ring which is not coherent and $G$ an FP-injective module which is not flat. Then $G_Z$ is almost FP-injective over $R$ but not over $Q$, and $(G_Z)_{\sharp}\cup(G_Z)^{\sharp}=Z$.
\end{example}
\begin{proof} Let $G'$ be the kernel of the canonical homomorphism $G\rightarrow G_Z$. By Proposition \ref{P:sharp} the multiplication in $G$ and $G/G'$ by any $s\in R\setminus Z$ is surjective. So, $G_Z\cong G/G'$. For each $s\in P\setminus Z$ we put $G_{(s)}=G/(0:_Gs)$. Then $G_{(s)}$ is FP-injective because it is isomorphic to $G$. Since $G'=\cup_{s\in P\setminus Z}(0:_Gs)$ then $G_Z\cong\varinjlim_{s\in P\setminus Z} G_{(s)}$. By \cite[33.9(2)]{Wis91} $G_Z$ is factor of the FP-injective module $\oplus_{s\in P\setminus Z}G_{(s)}$ modulo a pure submodule. Hence $G_Z$ is almost FP-injective over $R$. Since $Q$ is IF and $G_Z$ is not FP-injective by \cite[Theorem 3]{Cou06}, from  Theorem \ref{T:coh}  we deduce that $G_Z$ is not almost FP-injective over $Q$. We complete the proof by using Proposition \ref{P:sharp}(4).
\end{proof}

The following lemma is a consequence of \cite[Lemma 3]{Gil71} and \cite[Proposition 1.3]{KlLe69}.

\begin{lemma}
\label{L:ann} Let $R$ be a chain ring for which $Z=P$, and $A$ an ideal of $R$. Then $A\subset (0:(0:A))$ if and only if $P$ is faithful and there exists $t\in R$ such that $A=tP$ and $(0:(0:A))=tR$. 
\end{lemma}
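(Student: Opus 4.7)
The plan rests on a single auxiliary fact: in a chain ring with $Z=P$, $(0:(0:t))=tR$ for every $t\in R$. Once this is established, the lemma follows by manipulations with the chain structure.

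For the auxiliary fact, $tR\subseteq (0:(0:t))$ is automatic. For the reverse, I would take $0\ne s\in (0:(0:t))$ and use chain comparability to rule out $tR\subsetneq sR$. Under that assumption $t=sr$ with $r$ a non-unit; the hypothesis $Z=P$ gives $(0:r)\ne 0$, and comparability between $(0:r)$ and $sR$ forces $(0:r)\subseteq sR$ (otherwise $sr=t=0$). Choosing $0\ne x=sy\in (0:r)$, the relation $ty=rsy=0$ puts $y\in (0:t)$, whence $sy=0$ by hypothesis on $s$, i.e.\ $x=0$, a contradiction.

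For the forward direction, pick $t\in (0:(0:A))\setminus A$; comparability gives $A\subsetneq tR\subseteq (0:(0:A))$, and writing each $a\in A$ as $tr$ (with $r\in P$, else $t\in A$) gives $A\subseteq tP$. From $A\subseteq tR$ and $t\cdot (0:A)=0$ one reads off $(0:A)=(0:t)$, so $(0:(0:A))=(0:(0:t))=tR$ by the auxiliary fact. If some $p\in P$ had $tp\notin A$, the same analysis applied to $tp$ would give $(0:(0:A))=tpR$, hence $tR=tpR$, forcing $t(1-pu)=0$ and then $t=0$---a contradiction; so $A=tP$. Finally $(0:A)=(0:tP)=\{r:rt\in (0:P)\}$ combined with $(0:A)=(0:t)$ yields $(0:P)\cap tR=0$, and since $tR\ne 0$ and ideals in a chain ring are totally ordered, this forces $(0:P)=0$, i.e.\ $P$ is faithful.

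The converse direction is immediate: we may assume $t\ne 0$ (else $A=0=(0:(0:A))$ and there is nothing to prove). Then $tP\ne tR$, since $tP=tR$ would give $t(1-p)=0$ for some $p\in P$, forcing $t=0$ as $1-p$ is a unit. Hence $A=tP\subsetneq tR=(0:(0:A))$. The main obstacle is the auxiliary fact; the hypothesis $Z=P$ is essential there, supplying nonzero annihilators for every non-unit, and this is exactly what blocks $s\notin tR$ from being consistent with $s\cdot (0:t)=0$.
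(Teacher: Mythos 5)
Your proof is correct, but note that there is no internal proof in the paper to compare it with: the lemma is stated there only as ``a consequence of \cite[Lemma 3]{Gil71} and \cite[Proposition 1.3]{KlLe69}.'' What you have effectively done is reconstruct the content of those two citations from scratch, which is a genuine addition. Your auxiliary fact --- $(0:(0:t))=tR$ for every $t$ in a chain ring with $Z=P$ --- is exactly the Klatt--Levy ingredient, and your derivation is sound: if $s\notin tR$ then $t=sr$ with $r$ a nonunit, $Z=P$ gives $(0:r)\ne 0$, comparability forces $(0:r)\subseteq sR$ (the alternative $sR\subseteq(0:r)$ gives $t=0$, whence $(0:(0:t))=0$ contradicts $s\ne 0$), and the element $0\ne x=sy\in(0:r)$ yields $y\in(0:t)$ and hence $x=sy=0$, a contradiction. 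The forward direction (pick $t\in(0:(0:A))\setminus A$, deduce $A\subsetneq tR$ and $(0:A)=(0:t)$, hence $(0:(0:A))=tR$; force $tP\subseteq A$ by rerunning the argument on any $tp\notin A$ to get $tR=tpR$ and so $t=0$; extract faithfulness from $(0:P)\cap tR=0$ together with comparability and $tR\ne 0$) is the Gill ingredient and also checks out. The one blemish is the degenerate case in the converse: if $A=0$ and $P$ is faithful, the right-hand side of the equivalence is literally satisfied by $t=0$ while $A\subset(0:(0:A))$ is false, so it is not that ``there is nothing to prove'' --- the implication actually fails there, and the honest statement is that the lemma tacitly requires $t\ne0$ (equivalently $A\ne 0$). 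This is harmless for every application in the paper, where the ideals concerned are nonzero, but you should say it explicitly rather than wave at it.
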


\begin{proposition}\label{P:non1}
Let $R$ be a chain ring for which $Z\ne 0$. Then $\mathrm{w.d.}(G)\ne 1$ for any almost FP-injective $R$-module $G$.
\end{proposition}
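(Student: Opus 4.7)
Suppose, for contradiction, that $\mathrm{w.d.}(G)=1$. Then $G$ is not flat, so there exists $r\in R$ with $\mathrm{Tor}_1^R(G,R/rR)=(0{:}_G r)/(0{:}r)G\ne 0$; since $r$ must annihilate a nonzero element of $G$, we have $r\in Z\setminus\{0\}$.

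The key step is to produce an element $r''\in R$ with $(0{:}r'')=rR$. Given such an $r''$, the short exact sequence $0\to r''R\to R\to R/r''R\to 0$, together with the isomorphism $r''R\cong R/(0{:}r'')=R/rR$ and the vanishing of $\mathrm{Tor}_i^R(G,R)$ for $i\ge 1$, yields
\[
\mathrm{Tor}_2^R(G,R/r''R)\cong \mathrm{Tor}_1^R(G,r''R)\cong \mathrm{Tor}_1^R(G,R/rR)\ne 0,
\]
which contradicts $\mathrm{w.d.}(G)\le 1$.

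For the existence of $r''$ I invoke Lemma \ref{L:ann}. After a preliminary reduction to the case $Z=P$ (for example, by passing to $R/\mathrm{ann}_R(Z)$, which leaves the relevant Tor computation unaffected because $r\in Z$), Lemma \ref{L:ann} applied to the ideal $rR$ says $(0{:}(0{:}r))=rR$ unless we are in the exceptional case $rR=tP$ with $(0{:}(0{:}r))=tR$. In the generic case, one chooses $r''$ to be a suitable element of $(0{:}r)$ (a generator when $(0{:}r)$ is principal), and then $(0{:}r'')=(0{:}(0{:}r))=rR$. In the exceptional case the quotient $tR/rR$ is isomorphic to $R/P$, and the short exact sequence $0\to R/P\to R/rR\to R/tR\to 0$ combined with $\mathrm{Tor}_2^R(G,-)=0$ shows that the nontriviality of $\mathrm{Tor}_1^R(G,R/rR)$ is inherited by $\mathrm{Tor}_1^R(G,R/tR)$---restarting the argument with $t$ in place of $r$, whose annihilator ideal is no longer of the exceptional form---or by $\mathrm{Tor}_1^R(G,R/P)=\varinjlim_{p\in P}\mathrm{Tor}_1^R(G,R/pR)$, providing some new $p\in P$ playing the role of a fresh witness.

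The main obstacle is exactly the management of the exceptional case of Lemma \ref{L:ann}: one must verify that the iterative descent terminates, and one must be careful to justify the reduction to the situation $Z=P$ without losing the nontriviality of the chosen Tor class (this matters especially when $Z\subsetneq P$, since Example \ref{E:example} shows that almost FP-injectivity is not preserved under localization at $Z$). Everything else amounts to routine long-exact-sequence chasing in Tor over a chain ring.
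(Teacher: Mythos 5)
Your proposal has a fatal gap: nowhere does it use the hypothesis that $G$ is almost FP-injective, and the statement is false for general non-flat modules. Over a chain ring with $Z\ne 0$ there are modules of weak dimension exactly $1$: for instance $\mathrm{w.d.}(R/Z)=1$ when $R$ is not semicoherent (Proposition \ref{P:R/Z}), and one checks directly that $\mathrm{Tor}_1^R(R/Z,R/rR)\cong rR/rZ\cong R/(Z+(0:r))=R/Z\ne 0$ for every $0\ne r\in Z$. So if your ``key step'' could be carried out for such an $r$, your two-line Tor computation (which is valid for any module) would force $\mathrm{w.d.}(R/Z)\geq 2$, which is false. What actually happens is that the key step is impossible: if $(0:r'')=rR$ with $0\ne r$, then $0\ne r''\in Z$, $r\in Z$, and one verifies that $rR=rQ$, so that $(0:r'')$ is a principal, hence finitely generated, ideal of $Q$; by Proposition \ref{P:ValSemiCoh}(6) this makes $R$ semicoherent. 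Thus in the non-semicoherent case --- precisely case (4) of Theorem \ref{T:main}, where Proposition \ref{P:non1} is indispensable --- no element $r''$ with $(0:r'')=rR$ exists, and the contradiction cannot be manufactured. (Your opening claim that $r\in Z$ is also unjustified: Proposition \ref{P:sharp} gives $G^{\sharp}\subseteq Z$ for almost FP-injective $G$, not $G_{\sharp}\subseteq Z$, and the FP-injective module $H$ of Proposition \ref{P:H} has $H_{\sharp}=P$.)

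The hypothesis on $G$ has to enter through Lemma \ref{L:aFP}, and that is how the paper proceeds: it takes a flat presentation $0\rightarrow K\rightarrow F\rightarrow G\rightarrow 0$ which is not pure, picks $b\in R$ and $x\in F$ with $bx\in K\setminus bK$, and runs a case analysis on $(0:(K:x))$ versus $(0:b)$, using Lemma \ref{L:aFP} to lift elements modulo $K$ and the flatness of $K$ and $F$ to reach a contradiction; Lemma \ref{L:ann} only serves there to control the exceptional shape $A=tP$ of annihilator ideals, not to produce elements with prescribed principal annihilators. Your reduction to $Z=P$ by passing to $R/\mathrm{ann}_R(Z)$ is also unjustified: Tor over a proper quotient ring computes different groups than Tor over $R$, and almost FP-injectivity is not obviously inherited; the paper instead devotes a separate argument to showing $G\cong G_Z$ (using purity and Lemma \ref{L:aFP} again) so that one may assume $R=Q$. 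Finally, the ``iterative descent'' in your exceptional case is not shown to terminate. To repair the proof you would need to rebuild it around Lemma \ref{L:aFP}, i.e., around an essential use of the hypothesis on $G$.
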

\begin{proof}
By way of contradiction assume there exists an almost FP-injective $R$-module $G$ with $\mathrm{w.d.}(G)=1$. There exists an exact sequence $0\rightarrow K\rightarrow F\rightarrow G\rightarrow 0$ which is not pure, where $F$ is free and $K$ is flat. 

First we assume that $R=Q$, whence $P=Z$. So there exist $b\in R$ and $x\in F$ such that $bx\in K\setminus bK$. We put $B=(K:x)$. From $b\in B$ we deduce that $(0:B)\subseteq (0:b)$. We investigate the following cases:

{\bf 1}. $(0:B)\subset (0:b)$. 

Let $a\in (0:b)\setminus (0:B)$. Then $(0:a)\subset (0:(0:B))$. If $P$ is not faithful then $(0:a)\subset B$ by Lemma \ref{L:ann}. If $P$ is faithful  then $B\ne (0:(0:B))$ if $B=Pt$ for some $t\in R$. But, in this case $(0:a)$ is not of the form $Ps$ for some $s\in R$. So, $(0:a)\subset B$. By Lemma \ref{L:aFP} there exist $y\in F$ and $z\in K$ such that $x=ay+z$. It follows that $bx=bz\in bK$. Whence a contradiction.

{\bf 2.1}. $(0:B)=(0:b)\subset P$.

Let $r\in P\setminus (0:b)$. Then $(0:r)\subset Rb$. Let $0\ne c\in (0:r)$. There exists $t\in P$ such that $c=tb$. So, $(0:b)\subset (0:c)$. If $cx\in cK$, then $tbx=tby$ for some $y\in K$. Since $K$ is flat we get that $(bx-by)\in (0:t)K\subseteq bK$ ($c=bt\ne 0$ implies that $(0:t)\subset bR$), whence $bx\in bK$, a contradiction. Hence $cx\notin cK$. So, if we replace $b$ with $c$ we get the case {\bf 1}.

{\bf 2.2.1}. $(0:B)=(0:b)=P=bR$.

Since $b^2=0$ we have $b(bx)=0$. The flatness of $K$ implies that $bx\in bK$, a contradiction.

{\bf 2.2.2}. $(0 : B) = (0 : b) = P\ne bR$.

Let $c\in P\setminus bR$. Then $cx\notin K$ and there exists $s\in P$ such that $b=sc$. We have $(K : cx)=Rs\ne Rb$. So, if $scx\notin sK$ we get the case {\bf 2.1} by replacing $b$ with $s$ and $x$ with $cx$. Suppose that $scx\in sK$. We get $s(cx-y)=0$ for some $y\in K$. The flatness of F implies that $(cx-y)\in (0 : s)F\subset cF$. Whence $y=c(x +z)$ for some $z\in F$. If $y=cv$ for some $v\in K$ then $bx=sy=scv=bv\in bK$. This is false. Hence $c(x+z)\in K\setminus cK$ and $(0:(K:x+z))\subseteq (0:c)\subset Rs\subset P$. So, we get either the case {\bf 1} or the case {\bf 2.1} by replacing $x$ with $(x+z)$ and $b$ with $c$. 

Now, we assume that $R\ne Q$. First, we show that $G_Z$ is flat. Since $K$ and $F$ are flat then so are $K_Z$ and $F_Z$. If $Q$ is coherent, then $K_Z$ is FP-injective. So, it is a pure submodule of $F_Z$ and consequently $G_Z$ is flat. If $Q$ is not coherent, then $Z$ is flat, and by using \cite[Theorem 3]{Cou06} it is easy to show that $G_Z$ is almost FP-injective. Since $\mathrm{w.d.}(G_Z)\leq 1$, from above we deduce that $G_Z$ is flat. Let $G'$ be the kernel of the canonical homomorphism $G\rightarrow G_Z$. As in the proof of Proposition \ref{P:sharp} we have $G_Z\cong G/G'$. So, $G'$ is a pure submodule of $G$. For each $x\in G'$ there exists $s\in P\setminus Z$ such that $sx=0$. Hence $G'$ is a module over $R/Z$. But if $0\ne a\in Z$, $(0:a)\subseteq Z\subset (0:x)$ for any $x\in G'$. By Lemma \ref{L:aFP} $x=ay$ for some $y\in G$, and since $G'$ is a pure submodule, we may assume that $y\in G'$. Hence $G'=0$, $G\cong G_Z$ and $G$ is flat.  
\end{proof}

\begin{example}\label{E:IF}
Let $D$ be a valuation domain. Let $0\ne a\in P$. By \cite[Theorem 11]{Couch03} $R=D/aD$ is an IF-ring.
\end{example}

Now it is possible to state and to prove our main result.

\begin{theorem}
\label{T:main} For any almost FP-injective module $G$ over a chain ring $R$:
\begin{enumerate}
\item $\mathrm{w.d.}(G)=0$ if $R$ is an IF-ring;
\item if $R$ is a valuation domain which is not a field then:
\begin{enumerate}
\item $\mathrm{w.d.}(G)=0$ if $G_{\sharp}=0$ ($G$ is torsionfree);
\item $\mathrm{w.d.}(G)=1$ if $G_{\sharp}\ne 0$; 
\end{enumerate}
\item if $R$ is semicoherent but not coherent then;
\begin{enumerate}
\item $\mathrm{w.d.}(G)=\infty$ if $Z\subset G_{\sharp}$;
\item $\mathrm{w.d.}(G)=0$ if $G_{\sharp}\cup G^{\sharp}\subset Z$;
\item if $G_{\sharp}\cup G^{\sharp}=Z$ either $\mathrm{w.d.}(G)=0$ if $G$ is FP-injective or $\mathrm{w.d.}(G)=\infty$ if $G$ is not FP-injective;
\end{enumerate}
\item if $R$ is not semicoherent then $Z\otimes_RG$ is flat and almost FP-injective, and:
\begin{enumerate}
\item $\mathrm{w.d.}(G)=2$ if $Z\subset G_{\sharp}$;
\item $\mathrm{w.d.}(G)=0$ if $G_{\sharp}\cup G^{\sharp}\subset Z$;
\item if $G_{\sharp}\cup G^{\sharp}=Z$ either $\mathrm{w.d.}(G)=0$  or $\mathrm{w.d.}(G)=2$. More precisely $G$ is flat if and only if, for any $0\ne x\in G$, $(0:x)$ is not of the form $Qa$ for some $0\ne a\in Z$.
\end{enumerate}
\end{enumerate}
\end{theorem}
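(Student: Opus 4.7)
The plan is to dispatch the four cases in turn, reducing each via the earlier propositions to a computation in the primes $G_\sharp$, $G^\sharp$ and $Z$. Throughout, Proposition~\ref{P:non1} eliminates weak dimension $1$ whenever $Z\ne 0$, and Proposition~\ref{P:sharp}(4) handles the subcase $G_\sharp\cup G^\sharp\subsetneq Z$, covering (3b) and (4b) at one go.

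Cases (1) and (2) are short. Over an IF-ring every injective is flat, so every FP-injective (being pure in its injective hull) is flat, whence purity of $D$ in $E$ makes $G=E/D$ flat. Over a valuation domain the global weak dimension is $1$, giving $\mathrm{w.d.}(G)\le 1$, with equality iff $G$ has torsion iff $G_\sharp\ne 0$.

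For (3) and (4) I would first establish the uniform upper bound $\mathrm{w.d.}(G)\le\mathrm{w.d.}(H)$. Given $G=E/D$ with $D$ pure in the FP-injective $E$, embed $E$ purely in its injective hull $E'$ (FP-injectivity gives universal purity), then apply Proposition~\ref{P:inj} to realise $E'$ as a pure quotient of $I=R^{(\Lambda)}\oplus H^{(\Delta)}$. Three successive applications of Proposition~\ref{P:pure} (to $K\hookrightarrow I$, $E\hookrightarrow E'$ and $D\hookrightarrow E$) chain into $\mathrm{w.d.}(G)\le\mathrm{w.d.}(I)=\mathrm{w.d.}(H)$. In (4), Proposition~\ref{P:H}(4) turns this into $\mathrm{w.d.}(G)\le 2$, and (4a) follows because a witness $s\in G_\sharp\setminus Z$ is a nonzerodivisor killing some $x\ne 0$, so flatness would force $x\in(0:s)G=0$; with Proposition~\ref{P:non1} this pins down $\mathrm{w.d.}(G)=2$. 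For (4c) the hypothesis $G_\sharp\cup G^\sharp=Z$ forces $G$ to be a $Q$-module (since $R\setminus Z$ acts bijectively); the flatness criterion on $(0:x)$ I would prove via Lemma~\ref{L:aFP}, by showing an annihilator $Qa$ with $a\in Z$ produces a concrete Tor obstruction against $0\to(0:a)\to R\to Ra\to 0$, whereas its absence lets every relation in $G$ lift to a flat one. The residual structural claim in (4) that $Z\otimes_R G$ is flat and almost FP-injective is a short exercise from flatness of $Z$ (Proposition~\ref{P:ValSemiCoh}) and the pure presentation of $G$.

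The main obstacle is (3), where $\mathrm{w.d.}(H)=\infty$ and the upper bound is uninformative; we must exhibit infinite Tor in (3a) and in the non-FP-injective half of (3c). I would split $G$ via the pure sequence $0\to G'\to G\to G_Z\to 0$ with $G'=\ker(G\to G_Z)$, so by Proposition~\ref{P:pure} it suffices to bound whichever side is nonzero. In (3a), $G'\ne 0$ and every element of $G'$ is killed by some nonzerodivisor yet, by Lemma~\ref{L:aFP}, divisible by every nonzerodivisor; my plan is to extract from this divisibility a pure subquotient through which the infinite Tor obstruction of $R/Z$ supplied by Proposition~\ref{P:R/Z}(1) propagates into $G$. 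In the non-FP-injective half of (3c), $G\cong G_Z$ is a $Q$-module failing to be FP-injective over $R$, hence (via Theorem~\ref{T:coh} applied over $Q$ and the analysis in Example~\ref{E:example}) failing to be FP-injective over $Q$ as well; an infinite Tor witness computed over $Q$ should then pull back to $R$ through flatness of $Q$. Executing these two lifting arguments cleanly is the technical crux of the proof.
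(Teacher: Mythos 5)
Your treatment of cases (1), (2) and of the upper bound $\mathrm{w.d.}(G)\le 2$ in case (4) --- via Proposition \ref{P:inj}, the chain of pure exact sequences and Proposition \ref{P:pure} --- is exactly the paper's argument, and your use of Propositions \ref{P:sharp} and \ref{P:non1} to separate the subcases is also the right mechanism. The genuine gap is case (3), which you yourself flag as the unexecuted ``technical crux''. The paper never exhibits an infinite Tor witness: it quotes \cite[Theorem 2]{Cou12}, which asserts that a semicoherent non-coherent chain ring has finitistic weak dimension $1$. Hence finiteness of $\mathrm{w.d.}(G)$ already forces $\mathrm{w.d.}(G)\le 1$, Proposition \ref{P:non1} then forces $\mathrm{w.d.}(G)=0$ (note $Z\ne 0$ here, since a domain is coherent), and the whole case collapses to deciding flatness, which Proposition \ref{P:sharp} does. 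Your proposed alternative --- propagating the infinite Tor of $R/Z$ from Proposition \ref{P:R/Z} into $G$ through a pure subquotient of $G'$, or pulling back a Tor witness from $Q$ --- is not a routine lifting: $\mathrm{Tor}$ does not transfer along arbitrary subquotients, $G'$ is divisible by every element of $Z$ by Lemma \ref{L:aFP} (so it resembles $R/Z$ not at all), and the paper's logic runs in the opposite direction (contradiction from finiteness) rather than by exhibiting a witness. Without the finitistic-dimension input your plan would essentially have to reprove \cite[Theorem 2]{Cou12} from scratch.

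A second, smaller gap concerns (4c) and the first assertion of (4). For the flatness criterion the paper verifies explicitly that $x\in(0:r)G$ for every $0\ne r\in(0:x)$ when $A=(0:x)$ is not a nonzero principal ideal of $Q$; this requires picking $b\in(0:r)\setminus(0:A)$, then $a\in(0:r)\setminus Qb$ --- which exists precisely because $Q$ is not coherent --- and then Lemma \ref{L:ann} to get $(0:a)\subset(0:b)\subseteq A$ before Lemma \ref{L:aFP} can be invoked. Neither the non-coherence of $Q$ nor Lemma \ref{L:ann} appears in your sketch, and the direction you describe in most detail (a principal annihilator $Qa$ obstructs flatness) is the easy half. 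Finally, the flatness of $Z\otimes_RG$ is not ``a short exercise from flatness of $Z$ and the pure presentation of $G$'' --- the tensor product of a flat module with an arbitrary module need not be flat --- it is again a citation of \cite[Theorem 2]{Cou12}; only the almost FP-injectivity of $Z\otimes_RG$ is easy, since it is a flat $Q$-module and $Q$ is self FP-injective, so Proposition \ref{P:flat} applies.
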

\begin{proof}
$(1)$. Since $R$ is IF then $G$ is FP-injective and flat.

$(2)$. It is an immediate consequence of the fact that $\mathrm{w.gl.d}(R)=1$.

$(3)$. From \cite[Theorem 2]{Cou12} which asserts that $\mathrm{f.w.d.}(R)=1$ in this case, we deduce that $\mathrm{w.d.}(G)<\infty\Rightarrow\mathrm{w.d.}(G)\leq 1$ and by Proposition \ref{P:non1} $\mathrm{w.d.}(G)=0$. We use Proposition \ref{P:sharp} to complete the proof. (Example \ref{E:example} shows there exist almost FP-injective modules $G$ which are not FP-injective, with $Z=G_{\sharp}\cup G^{\sharp}$).

$(4)$. First we assume that $G$ is injective. By Proposition \ref{P:inj} there exists a pure exact sequence:
$0\rightarrow K\rightarrow I\rightarrow G\rightarrow 0$, such
that $I$ is a direct sum of submodules isomorphic to $R$ or $H.$ If $I$ is flat then so is $G$. We may assume that $G$ is not flat. It follows that $\mathrm{w.d.}(I)=2$ by Proposition \ref{P:H}. By Proposition \ref{P:pure} $\mathrm{w.d.}(G)\leq 2$. Then, we assume that $G$ is FP-injective. So, $G$ is a pure submodule of its injective hull $E$. Again, by Proposition \ref{P:pure}, $\mathrm{w.d.}(G)\leq 2$. If $G$ is almost FP-injective then $G\cong E/D$ where $E$ is FP-injective and $D$ a pure submodule. We again use Proposition \ref{P:pure} to get $\mathrm{w.d.}(G)\leq 2$. We use Propositions \ref{P:sharp} and \ref{P:non1} to complete the proof of (a), (b) and the first assertion of (c). 

Let $0\ne x\in G$ and $A=(0:x)$. Since $G$ is a $Q$-module $A$ is an ideal of $Q$. Suppose that $A$ is not a non-zero principal ideal of $Q$ and let $0\ne r\in A$. Then $rQ\subset A$. It follows that $(0:A)\subset (0:r)$. Let $b\in (0:r)\setminus (0:A)$. Since $Q$ is not coherent there exists $a\in (0:r)\setminus Qb$. Then $(0:a)\subset (0:b)\subseteq A$ by Lemma \ref{L:ann}. By Lemma \ref{L:aFP} there exists $y\in G$ such that $x=ay$, whence $x\in (0:r)G$. Now suppose that $A=Qr$ for some $0\ne r\in Z$. If $a\in (0:r)$ then $rQ\subset (0:a)$, whence $x\notin aG$. So, $x\notin (0:r)G$. This completes the proof of (c).

It remains to prove the first assertion. By \cite[Theorem 2]{Cou12} $Z\otimes_ZG$ is flat. It is easy to check that $Z$ is a $Q$-module, whence so is $Z\otimes_RG$. Since $Q$ is self FP-injective we conclude that $Z\otimes_RG$ is almost FP-injective by Proposition \ref{P:flat}.
\end{proof}

We say that a chain ring is {\bf Archimedean} if its maximal ideal is the only non-zero prime ideal.

\begin{corollary}
\label{C:main}  For any almost FP-injective module $G$ over an Archimedean chain ring $R$:
\begin{enumerate}
\item $\mathrm{w.d.}(G)=0$ if $R$ is an IF-ring;
\item $\mathrm{w.d.}(G)\leq 1$ if $R$ is a valuation domain;
\item either $\mathrm{w.d.}(G)=2$ or $\mathrm{w.d.}(G)=0$ if $R$ is not coherent. More precisely $G$ is flat if and only if, for any $0\ne x\in G$ $(0:x)$ is not of the form $Ra$ for some $0\ne a\in P$. Moreover $P\otimes_RG$ is flat and almost FP-injective.
\end{enumerate}
\end{corollary}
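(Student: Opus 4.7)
The plan is to deduce the corollary from Theorem \ref{T:main} by specializing it using two structural consequences of the Archimedean hypothesis: the zerodivisor prime $Z$ is either $0$ or $P$, and in the second case the localization $Q=R_P$ equals $R$ because $R$ is already local at $P$. Once this is observed, each case of the corollary is a rewriting of a case of the theorem.

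For parts (1) and (2), I focus on the case $Z=0$, where $R$ is a valuation domain. Such a ring is automatically coherent, since every finitely generated ideal is principal and, being isomorphic to $R$, is finitely presented. If $R$ is moreover a field it is an IF-ring and Theorem \ref{T:main}(1) gives $\mathrm{w.d.}(G)=0$; if not, Theorem \ref{T:main}(2) yields $\mathrm{w.d.}(G)\le 1$. Since a valuation domain is coherent, the hypothesis of part (3) forces $Z\ne 0$, hence $Z=P$.

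For part (3), with $Z=P$ we have $Q=R$, so Proposition \ref{P:ValSemiCoh} (equivalence of (1) and (3)) shows $R$ semicoherent $\Leftrightarrow$ $R$ coherent. Under the hypothesis, $R$ is therefore not semicoherent and Theorem \ref{T:main}(4) applies. I would next check that subcase (a) of that theorem is vacuous here: $G_{\sharp}$ is a proper prime ideal of $R$ (since $1\notin G_{\sharp}$), which in the Archimedean setting lies in $\{0,P\}=\{0,Z\}$, so the strict inclusion $Z\subset G_{\sharp}$ is impossible. The surviving subcases (b) and (c) both yield $\mathrm{w.d.}(G)\in\{0,2\}$; the flatness criterion in (c), rewritten with $Q=R$ and $Z=P$, becomes the criterion in the statement, and it holds vacuously in (b) because $G_{\sharp}=0$ forces $(0:x)=0$ for every $0\ne x\in G$. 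Finally, the assertion that $P\otimes_R G$ is flat and almost FP-injective is the first sentence of Theorem \ref{T:main}(4) applied to $Z=P$. The only real obstacle is to match parameters carefully between the theorem and the corollary; once the dictionary $Z=P$, $Q=R$, $G_{\sharp},G^{\sharp}\in\{0,P\}$ is in place, each assertion is immediate.
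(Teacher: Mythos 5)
Your proposal is correct and is exactly the specialization the paper intends: the corollary is stated without a separate proof as an immediate consequence of Theorem \ref{T:main}, and your dictionary $Z\in\{0,P\}$, hence $Q=R$ and ``semicoherent $\Leftrightarrow$ coherent'' when $Z=P$, together with the observation that $Z\subset G_{\sharp}$ is impossible when $Z=P$, is precisely what makes each case of the theorem collapse to the corresponding case of the corollary. (Part (1) is in fact a verbatim restatement of Theorem \ref{T:main}(1) and needs no case distinction on $Z$.)
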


Let us observe that $\mathrm{w.d.}(G)\leq 2$ for any almost FP-injective module $G$ over an Archime\-dean chain ring.

\begin{example}
Let $D$ be an Archimedean valuation domain. Let $0\ne a\in P$. Then $D/aD$ is Archimedean and it is IF by Example \ref{E:IF}.
\end{example}


\begin{corollary}
\label{T:proj} Let $R$ be an Archimedean chain ring. For any almost FP-injective $R$-module $G$ which is either countably generated or uniserial:
\begin{enumerate}
\item $\mathrm{p.d.}(G)\leq 1$ if $R$ is an IF-ring;
\item $\mathrm{p.d.}(G)\leq 2$ if $R$ is a valuation domain;
\item $\mathrm{p.d.}(G)\leq 3$ and $\mathrm{p.d.}(P\otimes_RG)\leq 1$ if $R$ is not coherent. 
\end{enumerate}
\end{corollary}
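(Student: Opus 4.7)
The strategy is to lift the weak-dimension bounds of Corollary~\ref{C:main} to projective-dimension bounds using the classical fact that a countably generated flat module over an arbitrary ring has projective dimension at most $1$, together with the analogous chain-ring-specific statement for uniserial flat modules.

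Concretely, assume first that $G$ is countably generated and choose a free resolution $\cdots\to F_1\to F_0\to G\to 0$ in which every $F_i$ is countably generated. Write $n=\mathrm{w.d.}(G)\leq 2$, as furnished by Corollary~\ref{C:main}. Then the $n$-th syzygy $K_n=\ker(F_{n-1}\to F_{n-2})$ is flat (since $\mathrm{Tor}_1^R(K_n,-)\cong\mathrm{Tor}_{n+1}^R(G,-)=0$) and countably generated, hence $\mathrm{p.d.}(K_n)\leq 1$ by the classical result. Splicing a length-one projective resolution of $K_n$ into the free resolution of $G$ yields $\mathrm{p.d.}(G)\leq n+1$. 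Specializing to the three cases of Corollary~\ref{C:main} gives the bounds $\mathrm{p.d.}(G)\leq 1$, $\leq 2$, and $\leq 3$ respectively.

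For the final claim in (3), Corollary~\ref{C:main} also says that $P\otimes_RG$ is flat (and almost FP-injective). Since countable generation of $G$ passes to $P\otimes_RG$, the same upgrading principle yields $\mathrm{p.d.}(P\otimes_RG)\leq 1$ directly.

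The main obstacle is the uniserial case, where $G$ need not be countably generated and the Kaplansky-type lemma does not apply verbatim. One must instead invoke the analogous fact over chain rings, namely that a uniserial flat module over a chain ring has projective dimension at most $1$; granting this, the telescoping argument above goes through identically, provided that in the uniserial setting one can choose a resolution whose syzygies remain uniserial (or at least in a class for which the structural conclusion still holds). Arranging this compatibility with the chain-ring structure, and confirming that $P\otimes_RG$ inherits the appropriate uniserial-type property, is the delicate point; everything else is formal bookkeeping on top of Corollary~\ref{C:main}.
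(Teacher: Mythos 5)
Your skeleton for the countably generated case (pass to the $n$-th syzygy, observe it is flat, use the countably-generated-flat-implies-$\mathrm{p.d.}\leq 1$ principle, splice) is essentially what the paper's citation of \cite[Lemmas 1 and 2]{Jens66} encapsulates, but two points you treat as routine are actually where the content lies. First, you cannot simply ``choose a free resolution of $G$ in which every $F_i$ is countably generated'': over a non-Noetherian ring the kernel of $R^{(\omega)}\twoheadrightarrow G$ need not be countably generated, so the syzygy $K_n$ you want to apply the classical lemma to is not obviously countably generated (and the genuinely classical statement is for countably \emph{presented} flat modules). What rescues this is precisely that every ideal of an Archimedean chain ring is countably generated (every ideal is uniserial, and \cite[Proposition 16]{Couc06} says uniserial modules over such rings are countably generated); this forces every submodule of a countably generated module to be countably generated, which is the content of Jensen's Lemma 1 and is what legitimizes your resolution and your syzygy.

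Second, and more seriously, the uniserial case is not a separate ``delicate point'' requiring a resolution with uniserial syzygies --- that route would not work anyway, since the first syzygy of a non-finitely-generated uniserial module under a free cover is not uniserial. The paper's fix is the same single fact: by \cite[Proposition 16]{Couc06} a uniserial module over an Archimedean chain ring \emph{is} countably generated, so the uniserial case reduces outright to the countably generated case and nothing new is needed. So the one missing idea in your proposal is this countable generation statement; it simultaneously closes the gap in your resolution argument and disposes of the uniserial case you left open. Your treatment of $P\otimes_RG$ is fine once this is in place ($P\otimes_RG$ is a quotient of $P^{(\omega)}$, hence countably generated, flat by Corollary \ref{C:main}, hence of projective dimension at most $1$).
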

\begin{proof}
By \cite[Proposition 16]{Couc06} each uniserial module (ideal) is countably generated. So, this corollary is a consequence of Corollary \ref{C:main} and \cite[Lemmas 1 and 2]{Jens66}.
\end{proof}

\begin{remark}
If $R$ is an Archimedean valuation domain then $\mathrm{w.d.}(G)\leq 1$ and $\mathrm{p.d.}(G)\leq 2$ for any $R$-module $G$. 
\end{remark}


Let $R$ be a ring, $M$ an $R$-module. A $R$-module V is {\bf $M$-projective} if the natural homomorphism $\mathrm{Hom}_R(V ,M)\rightarrow\mathrm{Hom}_R (V , M /X)$ is surjective for every submodule $X$ of $M$. We say that  $V$ is {\bf quasi-projective} if $V$ is $V$-projective. A ring $R$ is said to be an {\bf fqp-ring} if every finitely generated ideal of $R$ is quasi-projective.

The following theorem can be proven by  using \cite[Lemmas 3.8, 3.12 and 4.5]{AbJaKa11}.
\begin{theorem}
\label{T:localfqp} \cite[Theorem 4.1]{Cou15}. Let $R$ a local ring and $N$ its nilradical. Then $R$ is a fqp-ring if and only if either $R$ is a chain ring or $R/N$ is a valuation domain and $N$ is a divisible torsionfree $R/N$-module.
\end{theorem}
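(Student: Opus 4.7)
The plan is to derive the equivalence by treating the two directions separately, invoking the three cited lemmas of \cite{AbJaKa11} at the key steps.

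For sufficiency, I handle each disjunct. If $R$ is a chain ring, then any finitely generated ideal is principal: the generators $a_1,\ldots,a_n$ sit in a totally ordered chain of principal ideals, so $(a_1,\ldots,a_n)$ coincides with the largest $(a_i)$, which is cyclic. Every cyclic module is quasi-projective (the computation of $\mathrm{Hom}(R/I,R/J)$ for $I\subseteq J$ shows that $\mathrm{Hom}(R/I,R/I)\to\mathrm{Hom}(R/I,R/J)$ is the canonical surjection), so $R$ is fqp. If instead $R/N$ is a valuation domain and $N$ is a divisible torsionfree $R/N$-module, I would invoke \cite[Lemma 4.5]{AbJaKa11}, which I expect provides the direct quasi-projectivity criterion in this situation: divisibility of $N$ furnishes lifts of endomorphisms across any surjection $I\twoheadrightarrow I/X$, and torsionfreeness prevents annihilator obstructions to these lifts.

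For necessity, assume $R$ is a local fqp-ring. The plan is to combine Lemmas 3.8 and 3.12 of \cite{AbJaKa11} to extract the required structure of $R/N$ and $N$ respectively. Lemma 3.8 should force any two elements of $R$ with nonzero image in $R/N$ to generate comparable ideals, so that $R/N$ is a valuation ring; being reduced, it is a valuation domain. Lemma 3.12 should then constrain $N$: every $n\in N$ is divisible by every $r\in R\setminus N$ (so $N$ is divisible over $R/N$) and has trivial $R/N$-annihilator (so $N$ is torsionfree). If additionally the ideals not contained in $N$ happen to be totally ordered compatibly with those inside $N$, one falls into the chain-ring alternative; otherwise the second alternative holds, and the two cases together exhaust the possibilities.

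The main obstacle is this structural forward direction, namely showing that the fqp-condition forces both the divisibility and the torsionfreeness of $N$. The expected mechanism is by contradiction: starting from a putative violation---say $n\in N$ with $n\notin rN$ for some $r\in R\setminus N$, or $\bar{r}n=0$ in $N$ for some nonzero $\bar{r}\in R/N$---one builds the two-generator ideal $(r,n)$ and exhibits an endomorphism of a quotient $(r,n)/J$ that fails to lift to $(r,n)$, contradicting its quasi-projectivity. Carrying out this contradiction at the level of precision used in \cite{AbJaKa11} is where the technical weight of the argument sits.
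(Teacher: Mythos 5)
The paper does not prove this theorem: it is quoted from \cite[Theorem 4.1]{Cou15}, with only the remark that it can be derived from \cite[Lemmas 3.8, 3.12 and 4.5]{AbJaKa11}. So the comparison can only be with what a correct proof must contain, and your proposal falls short of that in two ways. First, it is a plan rather than a proof: the sufficiency of the second alternative is an appeal to a lemma whose statement you are guessing at (``which I expect provides the direct quasi-projectivity criterion''), and for necessity you explicitly leave the decisive step --- exhibiting a non-liftable endomorphism of a quotient of $(r,n)$ --- uncarried out, acknowledging that this is ``where the technical weight of the argument sits.'' The only part actually proved is that chain rings are fqp-rings (finitely generated ideals are principal and cyclic modules are quasi-projective), which is correct but is the easy quarter of the statement.

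Second, and more seriously, the architecture of your necessity argument is wrong. You propose to show that for \emph{every} local fqp-ring the nilradical $N$ is a divisible torsionfree $R/N$-module, and then to observe that the two alternatives ``exhaust the possibilities.'' But that unconditional claim is false: take a valuation domain $V$ of Krull dimension $2$ with value group $\mathbb{Z}\times\mathbb{Z}$ ordered lexicographically, let $a$ have value $(1,0)$ and put $R=V/aV$. This is a chain ring, hence an fqp-ring, its nilradical is $N=L/aV$ where $L$ is the height-one prime, and $R/N=V/L$; choosing $b$ of value $(1,-1)$ and $r$ of value $(0,1)$ gives $0\ne\bar b\in N$, $0\ne\bar r\in R/N$ with $\bar r\bar b=0$, so $N$ is \emph{not} torsionfree over $R/N$. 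The dichotomy in the theorem is therefore a genuine either/or: the correct necessity argument must assume from the outset that $R$ is \emph{not} a chain ring, extract from that a pair of incomparable principal ideals, and only then use quasi-projectivity of two-generated ideals (this is what \cite[Lemmas 3.8 and 3.12]{AbJaKa11} are for) to force the structure of $R/N$ and $N$. Your contradiction mechanism with the ideal $(r,n)$ is pointed in a reasonable direction, but without the ``not a chain ring'' hypothesis feeding it an incomparable pair, it cannot succeed, since the conclusion it aims at is false for chain rings.
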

\begin{lemma}
\label{L:simple} Let $R$ be a local ring and $P$ its maximal ideal. Assume that $(0:P)\ne 0$. If $R$ is a FP-injective module then $(0:P)$ is a simple module.
\end{lemma}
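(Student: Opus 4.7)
The plan is to show that $(0:P)$, which carries a natural $k = R/P$-vector space structure (since $P\cdot (0:P)=0$), has dimension one over $k$. Simplicity of $(0:P)$ as an $R$-module is equivalent to this one-dimensionality. First I would record the elementary observation that any $0\ne a\in (0:P)$ satisfies $(0:a)=P$: the inclusion $P\subseteq (0:a)$ is automatic from $a\in (0:P)$, and since $a\ne 0$ the annihilator is a proper ideal, which in a local ring containing $P$ must equal $P$.

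Suppose for contradiction that $\dim_k(0:P)\ge 2$; then there exist $a\in (0:P)\setminus\{0\}$ and $b\in (0:P)\setminus Ra$ (note $Ra=ka$ as $P$ kills $(0:P)$). Since $(0:a)=P=(0:b)$, the assignment $ra\mapsto rb$ is well defined and gives an $R$-linear map $\phi:Ra\to R$: indeed, $ra=0\iff r\in P\implies rb=0$.

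The key step is to extend $\phi$ to a map $R\to R$ using FP-injectivity of $R$. Consider the short exact sequence $0\to Ra\to R\to R/Ra\to 0$. Because $Ra$ is cyclic (hence finitely generated) and $R$ itself is free of rank one, $R/Ra$ is finitely presented. Applying $\mathrm{Hom}_R(-,R)$ and using $\mathrm{Ext}^1_R(R/Ra,R)=0$, which holds by FP-injectivity of $R$, we obtain an extension $\tilde\phi:R\to R$ of $\phi$. Writing $c=\tilde\phi(1)$ gives $b=\phi(a)=\tilde\phi(a)=ca\in Ra$, contradicting the choice of $b$. Hence $\dim_k(0:P)=1$, and $(0:P)$ is simple.

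I do not anticipate a serious obstacle: the only things to double-check are the two routine points, namely that $R/Ra$ is finitely presented (which reduces to $Ra$ being finitely generated, as it is cyclic), and the standard equivalence between the vanishing of $\mathrm{Ext}^1_R(F,R)$ for finitely presented $F$ and the lifting of homomorphisms from finitely generated submodules of finitely presented modules into $R$.
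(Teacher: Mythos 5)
Your proof is correct. The paper disposes of this lemma in two lines: for $0\ne a\in(0:P)$ one has $(0:a)=P$, and by a cited result of Jain on self FP-injective rings (\cite[Corollary 2.5]{Ja73}) every principal ideal satisfies $Ra=(0:(0:a))$, so $Ra=(0:P)$ and the socle is simple. What you have done is prove the relevant instance of Jain's result from scratch: given $a,b\in(0:P)$ nonzero, the equality $(0:a)=P=(0:b)$ makes $ra\mapsto rb$ a well-defined map $Ra\to R$, and since $R/Ra$ is finitely presented, the vanishing of $\mathrm{Ext}^1_R(R/Ra,R)$ extends this map to $R$, forcing $b\in Ra$. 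This is exactly the Baer-type extension argument underlying the cited corollary, so the two proofs are the same in substance; yours is self-contained where the paper outsources the key step, at the cost of a few extra lines. All the routine points you flag do check out: $(0:a)=P$ because $(0:a)$ is a proper ideal containing the maximal ideal, $R/Ra$ is finitely presented because its relation module $Ra$ is cyclic, and the surjectivity of $\mathrm{Hom}_R(R,R)\to\mathrm{Hom}_R(Ra,R)$ follows from the long exact sequence for $\mathrm{Hom}_R(-,R)$ applied to $0\to Ra\to R\to R/Ra\to 0$.
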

\begin{proof}
Let $0\ne a\in (0:P)$. By \cite[Corollary 2.5 ]{Ja73} $Ra=(0:(0:a))$. But $(0:a)=P$. So, $Ra=(0:P)$.
\end{proof}

\begin{example}
Let $D$ be a valuation domain which is not a field and $E$ a non-zero divisible torsionfree $D$-module which is not uniserial. Then $R=D\propto E$ the trivial extension of $D$ by $E$ is a local fqp-ring which is not a chain ring by \cite[Corollary 4.3(2)]{Cou15}.
\end{example}

\begin{theorem}
\label{T:main1} For any non-zero almost FP-injective module $G$ over a local fqp-ring $R$ which is not a chain ring, $\mathrm{w.d.}(G)=\infty$.
\end{theorem}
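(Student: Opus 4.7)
The plan is to unpack the ring-theoretic structure given by Theorem~\ref{T:localfqp} and reduce the claim to a homological computation with the nilradical. Because $R$ is a local fqp-ring that is not a chain ring, Theorem~\ref{T:localfqp} yields $N\neq 0$, $R/N$ a valuation domain, and $N$ a divisible torsion-free $R/N$-module. Two structural consequences will drive the argument: the $R$-action on $N$ factors through $R/N$, so $N^{2}=0$; and torsion-freeness of $N$ over $R/N$ gives $(0{:}n)=N$ for every $0\neq n\in N$.

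Given a non-zero almost FP-injective module $G$, I would first produce a non-zero element $y\in G$ with $Ny=0$. Starting from any $0\neq x\in G$, if $Nx=0$ put $y:=x$; otherwise pick $n\in N$ with $nx\neq 0$, so that $N(nx)\subseteq N^{2}x=0$ and one puts $y:=nx$. Thus $G$ contains a non-zero cyclic submodule $Ry\cong R/(0{:}y)$ with $N\subseteq(0{:}y)$.

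The main line of attack is to exhibit an $R$-module $M$ with $\mathrm{Tor}^{R}_{n}(G,M)\neq 0$ for every $n$, the natural candidate being $M=N$. The core computation is $\mathrm{w.d.}_{R}(R/N)=\infty$: one builds a free resolution $P_{\bullet}\to R/N$ whose differentials all factor through multiplication by elements of $N$ (clear in the first step, with care in higher degrees using the divisible torsion-free structure of $N$ over $R/N$); because $N^{2}=0$, the tensored complex $P_{\bullet}\otimes_{R}N$ has all differentials zero, yielding $\mathrm{Tor}^{R}_{n}(R/N,N)\neq 0$ for every $n\geq 0$. The almost FP-injective structure of $G$ is then used, via an adaptation of Lemma~\ref{L:aFP} to the local fqp setting, to lift the cyclic map $R/(0{:}y)\to G$ to higher cycles, propagating this non-vanishing into $\mathrm{Tor}^{R}_{n}(G,N)$.

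The main obstacle is the last propagation step: infinite weak dimension does not pass from a submodule to the ambient module in general. The cleanest route is to show that a suitable cyclic copy of $R/N$ embeds in $G$ as a \emph{pure} submodule, so that Proposition~\ref{P:pure} gives $\mathrm{w.d.}(G)\geq\mathrm{w.d.}(R/N)=\infty$. Establishing purity looks like it will require careful use of the presentation $G=E/D$ with $D$ pure in an FP-injective $E$, combined with the divisibility of $N$ over $R/N$, to lift $y$ through repeated division by elements of $R\setminus N$ in a coherent way. A secondary delicate point is the case where $R/N$ is not a field, because then the syzygies of $R/N$ over $R$ are not immediately $N$-multiples and the vanishing of the differentials in the resolution of $R/N$ after tensoring with $N$ requires extra bookkeeping using the $\mathrm{Frac}(R/N)$-vector space structure on $N$.
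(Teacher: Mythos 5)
Your plan hinges on two steps that do not close, and the more serious one is provably impossible rather than merely unproven. Any copy of $R/N$ inside $G$ is of the form $Ry$ with $(0:y)=N$, and such a submodule is never pure in a non-zero almost FP-injective $G$. Indeed, since $R/N$ is a domain and $N$ is torsion-free over it, every nonunit $a\in R\setminus N$ has $(0:a)=0\subsetneq N\subseteq(0:y)$, so the argument of Lemma \ref{L:aFP} (writing $G=E/D$ with $D$ pure in $E$ FP-injective) yields $y\in aG$; but $y\in aRy$ would force $(1-ar)y=0$ with $1-ar$ a unit, i.e.\ $y=0$. Hence the equation $y=aX$ is solvable in $G$ but not in $Ry$, and purity fails whenever $R\neq Q$. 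Even in the primary case $N=P$ it fails, e.g.\ for $G=\mathrm{E}(R/P)$, where injectivity gives $y\in nG$ for $0\neq n\in N$ while $nRy=0$. Since you correctly identify that infinite weak dimension does not pass from a non-pure submodule to the ambient module, the proposal has no route from $\mathrm{w.d.}(R/N)=\infty$ to $\mathrm{w.d.}(G)=\infty$. The secondary step is also incomplete as you acknowledge: when $R/N$ is not a field the first syzygy of $R/N$ is $N$ itself, a $\mathrm{Frac}(R/N)$-vector space whose defining relations involve coefficients that are units modulo $N$, so the differentials of a free resolution do not all factor through $N$ and the tensored complex is not the zero complex.

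The paper's proof goes the opposite way and avoids any direct $\mathrm{Tor}$ computation: it invokes $\mathrm{f.w.d.}(R)\leq 1$ from \cite[Proposition 5.2]{Cou15} (and Bass's theorem giving $\mathrm{f.w.d.}(Q)=0$ for the primary localization $Q=R_N$), so that it suffices to rule out $\mathrm{w.d.}(G)\leq 1$. Assuming this, one shows as in Proposition \ref{P:non1} that $G\cong G_N$ is flat, hence free over $Q$; a non-zero free almost FP-injective module forces $Q$ to be self FP-injective, which by Lemma \ref{L:simple} makes $N$ simple and $R$ a chain ring, a contradiction. If you want to salvage your computation that $\mathrm{w.d.}(R/N)=\infty$, the efficient argument is likewise via the finitistic bound: $N\otimes_RN\neq 0$ while $N^2=0$ shows $N$ is not flat, so $\mathrm{w.d.}(R/N)\geq 2>\mathrm{f.w.d.}(R)$ and hence $\mathrm{w.d.}(R/N)=\infty$; but this still does not transfer to $G$ without the ceiling argument above.
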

\begin{proof}
By \cite[Proposition 5.2]{Cou15} $\mathrm{f.w.d.}(R)\leq 1$. If $N$ is the nilradical of $R$ then its quotient ring $Q$ is $R_N$ and this ring is primary. 

First assume that $R=Q$. Then each flat module is free and $\mathrm{f.w.d.}(R)=0$ by \cite[Theorems P and 6.3]{Bas60}. If $G$ is free, it follows that $R$ is FP-injective. By Lemma \ref{L:simple} this is possible only if $N$ is simple, whence $R$ is a chain ring. Hence $\mathrm{w.d.}(G)=\infty$.

Now assume that $R\ne Q$ and $\mathrm{w.d.}(G)\leq 1$. Then $\mathrm{w.d.}(G_N)\leq 1$. Since $\mathrm{f.w.d.}(Q)=0$ it follows that $G_N$ is flat. As in the proof of Proposition \ref{P:non1}, with the same notations we show that $G_N\cong G/G'$ and $G'$ is a module over $R/N$. But if $0\ne a\in N$, $(0:a)=N\subset (0:x)$ for any $x\in G'$. As in the proof of Lemma \ref{L:aFP} we show that $x=ay$ for some $y\in G$. Since $G'$ is a pure submodule, we may assume that $y\in G'$. Hence $G'=0$, $G\cong G_N$ and $G$ is flat. We use the first part of the proof to conclude.
\end{proof}

\end{document}